\begin{document}
\title{The prime spectrum of an $L$-algebra}
\author{Wolfgang Rump and Leandro Vendramin}
\dedicatory{to B.~V.~M.}

\address{Institute for Algebra and
Number Theory, University of Stuttgart, 
Pfaffenwaldring 57, D-70550 Stuttgart, Germany}
\email{rump@mathematik.uni-stuttgart.de}

\address{Department of Mathematics and Data
Science, Vrije Universiteit Brussel, Pleinlaan 2, 1050 Brussel}
\email{Leandro.Vendramin@vub.be}

\keywords{L-algebra, self-similarity}
\subjclass[2010]{06B10, 06F05, 08A55}

\maketitle

\theoremstyle{plain}
\newtheorem{thm}{Theorem}
\newtheorem{cor}[thm]{Corollary}
\newtheorem{prop}{Proposition}
\newtheorem{lem}{Lemma}

\theoremstyle{definition}
\newtheorem{Definition}{Definition}
\newtheorem{exa}{Example}
\newtheorem{rem}{Remark}

\renewcommand{\labelenumi}{\rm (\alph{enumi})}
\newcommand{\hs}{\hspace{2mm}}
\newcommand{\vsp}{\vspace{4ex}}
\newcommand{\vspc}{\vspace{-1ex}}
\newcommand{\hra}{\hookrightarrow}
\newcommand{\tra}{\twoheadrightarrow}
\newcommand{\md}{\mbox{-\bf mod}}
\newcommand{\Mod}{\mbox{-\bf Mod}}
\newcommand{\Mdd}{\mbox{\bf Mod}}
\newcommand{\mdd}{\mbox{\bf mod}}
\newcommand{\latt}{\mbox{-\bf lat}}
\newcommand{\Proj}{\mbox{\bf Proj}}
\newcommand{\Inj}{\mbox{\bf Inj}}
\newcommand{\Ab}{\mbox{\bf Ab}}
\newcommand{\CM}{\mbox{-\bf CM}}
\newcommand{\Prj}{\mbox{-\bf Proj}}
\newcommand{\prj}{\mbox{-\bf proj}}
\newcommand{\ra}{\rightarrow}
\newcommand{\eps}{\varepsilon}
\renewcommand{\epsilon}{\varepsilon}
\renewcommand{\phi}{\varphi}
\renewcommand{\rho}{\varrho}
\renewcommand{\hom}{\mbox{Hom}}
\newcommand{\ex}{\mbox{Ext}}
\newcommand{\rad}{\mbox{Rad}}
\renewcommand{\Im}{\mbox{Im}}
\newcommand{\oti}{\otimes}
\newcommand{\sig}{\sigma}
\newcommand{\en}{\mbox{End}}
\newcommand{\Qq}{\mbox{Q}}
\newcommand{\lra}{\longrightarrow}
\newcommand{\lras}{\mbox{ $\longrightarrow\raisebox{1mm}{\hspace{-6.5mm}$\sim$}\hspace{3mm}$}}
\newcommand{\Eq}{\Leftrightarrow}
\newcommand{\Equ}{\Longleftrightarrow}
\newcommand{\Ra}{\Rightarrow}
\newcommand{\Lra}{\Longrightarrow}
\newcommand{\A}{\mathbb{A}}
\newcommand{\N}{\mathbb{N}}
\newcommand{\Z}{\mathbb{Z}}
\newcommand{\C}{\mathbb{C}}
\newcommand{\Q}{\mathbb{Q}}
\newcommand{\R}{\mathbb{R}}
\renewcommand{\H}{\mathbb{H}}
\newcommand{\K}{\mathbb{K}}
\newcommand{\F}{\mathbb{F}}
\renewcommand{\P}{\mathbb{P}}
\newcommand{\B}{\mathbb{B}}
\newcommand{\rk}{\mbox{r}}
\newcommand{\p}{\mathfrak{p}}
\newcommand{\q}{\mathfrak{q}}
\renewcommand{\k}{\mathfrak{k}}
\newcommand{\AAA}{\mathfrak{A}}
\newcommand{\PPP}{\mathfrak{P}}
\renewcommand{\AA}{\mathscr{A}}
\renewcommand{\SS}{\mathscr{S}}
\newcommand{\DD}{\mathscr{D}}
\newcommand{\BB}{\mathscr{B}}
\newcommand{\LL}{\mathscr{L}}
\newcommand{\HH}{\mathscr{H}}
\newcommand{\NN}{\mathscr{N}}
\newcommand{\CC}{\mathscr{C}}
\newcommand{\EE}{\mathscr{E}}
\newcommand{\MM}{\mathscr{M}}
\newcommand{\OO}{\mathscr{O}}
\newcommand{\PP}{\mathscr{P}}
\newcommand{\II}{\mathscr{I}}
\newcommand{\TT}{\mathscr{T}}
\newcommand{\XX}{\mathscr{X}}
\newcommand{\YY}{\mathscr{Y}}
\newcommand{\FF}{\mathscr{F}}
\newcommand{\GG}{\mathscr{G}}
\newcommand{\RR}{\mathscr{R}}
\newcommand{\setm}{\smallsetminus}
\renewcommand{\le}{\leqslant}
\renewcommand{\ge}{\geqslant}
\newcommand{\rat}{\rightarrowtail}
\newcommand{\op}{^{\mbox{\scriptsize op}}}
\newcommand{\ltra}{\lra\!\!\!\!\!\!\:\ra}
\newcommand{\subs}{\subset}
\newcommand{\sups}{\supset}
\newcommand{\subsn}{\subsetneq}
\newcommand{\nsubs}{\not\subset}
\newcommand{\pt}{\makebox[0pt][r]{\bf .\hspace{.5mm}}}
\newcommand{\dpt}{\makebox[0mm][l]{.}}
\newcommand{\dpc}{\makebox[0mm][l]{,}}
\newcommand{\noth}{\varnothing}
\newcommand{\da}{\downarrow\!\!}
\newcommand{\ua}{\uparrow\!\!}

\begin{abstract}
    We prove that the lattice of ideals of an arbitrary $L$-algebra is distributive. As a consequence,
    a spectral theory applies with no restriction. We also study the spectrum (i.e. the set of prime ideals) 
    of $L$-algebras and characterize prime ideals in topological terms. 
\end{abstract}

\section{Introduction}

The concept of $L$-algebra \cite{Log} represents a quantum structure with a close
relationship to braidings \cite{Gar, DP, D}, non-commutative logic \cite{MT, Ch, BvN, L-Alg},
and the Yang-Baxter equation \cite{ESS, Dual}. For example,
Artin's braid group \cite{Art} and similar torsion-free groups \cite{BS, Del, D, DP,
Deh} are generated by a finite $L$-algebra. More generally, every {\em right $\ell$-group}
\cite{Rum}, that is, a group with a lattice order invariant under right multiplications,
is determined by its negative cone which is an $L$-algebra. So this wide class of
partially ordered groups can be treated by methods from the theory of $L$-algebras. The
projection lattice of a von Neumann algebra is an $L$-algebra, generating a right
$\ell$-group which is a complete invariant \cite{OML}. In functional analysis, $L$-algebras
arise in connection with Riesz spaces \cite{LZ}, a special class of two-sided $\ell$-groups
\cite{BKW, Dar}. 

In \cite{TopL} it is proved that if the lattice $\II(X)$ of ideals of an $L$-algebra $X$ is
distributive, then $\II(X)$ is again an $L$-algebra and even a spatial locale. In other words,
the ideals of $X$ can be identified with the open sets of a topological space $\mbox{Spec}\,X$,
the {\em spectrum} of $X$. For three
classes of $L$-algebras, the distributivity of $\II(X)$ is verified: the HBCK-algebras
\cite{BF93} which became prominent in connection with Wro\'nski's conjecture \cite{Wro,
Kow}, lattice effect algebras \cite{DvPu, Rie2} which formalize unsharp quantum measurement
\cite{GG, Gud}, and sharp discrete $L$-algebras \cite{Rum} which comprise the projective
spaces with a distinguished elliptic polarity \cite{qset}. HBCK-algebras are equivalent to
$CKL$-algebras \cite{Gli}, generalizing Heyting algebras, and {\em MV-algebras} \cite{Ch} which
can be regarded as abelian $\ell$-groups with a distinguished strong order unit \cite{Mun}.     

In this paper, we show that for every $L$-algebra $X$, the lattice of ideals is distributive, so
that the spectral theory of \cite{TopL} applies with no restriction. Our proof is based on
an explicit description of the join $I\vee J$ of two ideals (Proposition~\ref{p2}). We
show that $\mbox{Spec}\,X$ is a sober space with a basis of quasi-compact open sets
(Proposition~\ref{p3}). In general, the intersection of finitely many quasi-compact open
sets is not quasi-compact (Example~\ref{exa:5}), so that $\mbox{Spec}\,X$ need not be a spectral
space, and the quasi-compact open sets need not be an $L$-subalgebra of the locale
$\mbox{Spec}\,X$ (Example~\ref{exa:6}). An ideal $P$ of $X$ is prime if and only if the $L$-algebra
$X/P$ is subdirectly irreducible (Proposition~\ref{p6}). As in commutative ring theory, 
the open and the closed subsets in the spectrum of an $L$-algebras are again spectra of $L$-algebras (Theorem \ref{t4}).

\section{The ideals of an $L$-algebra} 

For a set $X$ with a binary operation $(x,y)\mapsto x\cdot y$, an element $1\in X$ is said to be a
{\em logical unit} \cite{Log} if it satisfies
\begin{equation}
\label{1}
x\cdot x=x\cdot 1=1,\qquad 1\cdot x=x  
\end{equation}
for all $x\in X$. (Note that $x\cdot x=1$ implies that a logical unit is unique.)
If, in addition,
\begin{gather}
(x\cdot y)\cdot(x\cdot z)=(y\cdot x)\cdot(y\cdot z),\label{2}\\
x\cdot y=y\cdot x=1\;\Lra\; x=y \label{3}
\end{gather} 
hold for $x,y,z\in X$, then $X=(X;\cdot)$ is said to be an {\em $L$-algebra} \cite{Log}.

For any $L$-algebra $X$,
\begin{equation}\label{4}
x\le y\; :\Equ\; x\cdot y=1  
\end{equation}
is a partial order. 
The operation of $X$ is sometimes denoted by an arrow $\ra$ since it can be interpreted as
implication
between propositions. Then \eqref{4} is the entailment relation, and 1 characterizes the
true propositions, up to logical equivalence \eqref{3}. Equation \eqref{2} holds in
the most important generalizations of classical logic, including intuitionistic, many-valued,
and quantum logic \cite{L-Alg}.

A subset $I$ of an $L$-algebra $X$ is said to be an {\em ideal} \cite{Log} if $1\in I$ and
\begin{align}
x\in I\text{ and }x\cdot y\in I\; &\Ra\; y\in I, \label{5}\\
x\in I\; &\Ra\; (x\cdot y)\cdot y\in I, \label{6}\\
x\in I\; &\Ra\; y\cdot x\in I\text{ and } y\cdot(x\cdot y)\in I. \label{7}
\end{align}
Every ideal $I$ gives rise to a congruence relation
\begin{equation}\label{8}
x\equiv y\; :\Equ\; x\cdot y\in I\text{ and } y\cdot x\in I  
\end{equation} 
of $X$, that is, $x\equiv y$ implies $z\cdot x\equiv z\cdot y$ and $x\cdot z\equiv y\cdot z$.
We refer to \eqref{8} as congruence {\em modulo $I$}. The equivalence classes with respect to
$\equiv$ form an $L$-algebra $X/I$ with the induced $L$-algebra operation. Conversely, every
congruence $\equiv$ 
comes from an ideal $I:=\{x\in X\:|\:x\equiv 1\}$. For example, every {\em morphism} of
$L$-algebras, that is, a map $f\colon X\ra Y$ with $f(x\cdot y)=f(x)\cdot f(y)$ for all
$x,y\in X$, determines an ideal $\mbox{Ker}(f):=\{x\in X\:|\:f(x)=1\}$ of $X$, the
{\em kernel} of $f$, and $X/\mbox{Ker}(f)$ is isomorphic to the {\em image} $\mbox{Im}(f):=
f(X)$ of $f$. This is precisely the property of normality of  
the pointed category of $L$-algebras; see of \cite{Ja}.
A subset $Y$ of an $L$-algebra $X$ is called an {\em $L$-subalgebra} if $Y$
is closed with respect to the operation of $X$. Thus, with the induced operation, $Y$ is
an $L$-algebra with a morphism $Y\hra X$. Dually, the ideals of $X$ correspond to the
surjective morphisms $X\tra Z$.  

Thus, from a categorical point of view, ideals of an $L$-algebra behave quite similar to
ideals of a ring or normal subgroups of a group, despite their more involved definition. 
This comes from the fact that $L$-algebras form an ideal determined category \cite{JMTU}.

Recall that an $L$-algebra $X$ is said to be {\em self-similar} if
for each $y\in X$, the downset $\da y:=\{z\in X\:|\:z\le y\}$ is mapped bijectively onto $X$
under the map $\sigma_y\colon\da y\ra X$ with $\sigma_y(z):=y\cdot z$. This provides $X$
with a multiplication $xy:=\sigma_y^{-1}(x)$ which makes $X$ into a monoid with unit element
1. In general, the maps $\sigma_y$ are injective, which leads to a partial multiplication
for any $L$-algebra $X$: For $x,y\in X$, the product $xy$ exists and is equal to $z$ if and
only if $y\cdot z=x$ and $z\le y$.  

By \cite{Log}, Theorem~1 and Proposition~5, a self-similar $L$-algebra can be
characterized as a monoid $A$ (with juxtaposition as operation and unit element 1) with a
second binary operation $A\times A\to A$, $(a,b)\mapsto a\cdot b$ such that the equations   
\begin{align}
a\cdot ba &= b,\label{9} \\
ab\cdot c &= a\cdot(b\cdot c),\label{10}\\
(a\cdot b)a &= (b\cdot a)b,\label{11}
\end{align}
hold in $A$. So the category of self-similar $L$-algebras is a 
variety in the sense of universal algebra. 
It is easily checked that $(A;\cdot)$ is indeed an $L$-algebra. For example,
Eq.~\eqref{2} follows by Eqs.~\eqref{10} and \eqref{11}. Note that by \eqref{4}, Eq.~\eqref{10} 
yields
\begin{equation}\label{12}
ab\le c\;\Equ\; a\le b\cdot c,  
\end{equation} 
which shows that the monoid operation of a self-similar $L$-algebra $A$ is uniquely determined
by the $L$-algebra structure. By \cite{Log}, Theorem~3, the self-similar closure 
is characterized by a property which can easily be checked in concrete examples:

\begin{thm}
\label{t1}
Let $X$ be an $L$-subalgebra of a self-similar $L$-algebra $A$. Then $A\cong S(X)$ if
and only if the monoid $A$ is generated by $X$.
\end{thm}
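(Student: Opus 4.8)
The plan is to read the conclusion $A\cong S(X)$ as an isomorphism \emph{under $X$} and to reduce the whole statement to a single injectivity claim. First I would record the basic facts about the self-similar closure from \cite{Log}: the canonical map $\eta\colon X\hra S(X)$ is an $L$-embedding whose image generates $S(X)$ as a monoid, and $S(X)$ is universal, i.e.\ every $L$-algebra morphism from $X$ into a self-similar $L$-algebra $B$ extends uniquely to a morphism $S(X)\ra B$. I would also note that, between self-similar $L$-algebras, every $L$-algebra morphism automatically preserves the monoid product: if $f$ is such a morphism and $xy=z$, meaning $y\cdot z=x$ and $z\le y$, then $f$ preserves $\cdot$ and hence $\le$, so $f(y)\cdot f(z)=f(x)$ and $f(z)\le f(y)$, whence $f(x)f(y)=f(z)$ in the target. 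Thus self-similar morphisms are just $L$-morphisms, and the universal extension $\widehat{\jmath}\colon S(X)\ra A$ of the inclusion $X\hra A$ is a monoid homomorphism.

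Given this setup, the forward direction is immediate: if $A\cong S(X)$ over $X$, then $X$ generates $A$ as a monoid because it generates $S(X)$. For the converse, assume the monoid $A$ is generated by $X$. The image of $\widehat{\jmath}$ is a submonoid of $A$ containing $X$, hence all of $A$, so $\widehat{\jmath}$ is surjective. Everything therefore comes down to showing that $\widehat{\jmath}$ is injective, equivalently that any equation $x_1\cdots x_m=y_1\cdots y_n$ with $x_i,y_j\in X$ that holds in $A$ already holds in $S(X)$.

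To prove injectivity I would argue that the full operation table of an $X$-generated self-similar $L$-algebra is determined by the $L$-operation of $X$ alone. Using \eqref{10} repeatedly gives $(x_1\cdots x_m)\cdot c=x_1\cdot\bigl(x_2\cdot(\cdots(x_m\cdot c))\bigr)$, which reduces the computation of $p\cdot q$ for $X$-products $p,q$ to iterated expressions of the form $x\cdot(y_1\cdots y_n)$ with $x\in X$. The heart of the matter is then to rewrite each such $x\cdot(y_1\cdots y_n)$, by means of \eqref{9} and \eqref{11}, as an $X$-product, in a way that is confluent and uses nothing about the ambient algebra beyond the operation of $X$. This would yield a normal form for the elements of \emph{any} self-similar $L$-algebra generated by $X$, depending only on $X$. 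Since both $S(X)$ and $A$ are such algebras and $\widehat{\jmath}$ fixes $X$ pointwise, it would carry the normal form of an element to the same normal form, hence be injective; together with surjectivity this gives $A\cong S(X)$.

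The main obstacle is exactly this last reduction: proving that $x\cdot(y_1\cdots y_n)$ always collapses to an $X$-product and that the collapse is independent of the ambient algebra. Equivalently, one must show that passing from $S(X)$ to $A$ introduces no relations among $X$-products beyond those forced by \eqref{9}--\eqref{11}; this is a confluence argument in which the self-similar identities do the real work. I expect the cleanest route is to establish the reduction rule $x\cdot(y_1\cdots y_n)=w$, where $w$ is an explicit $X$-product written solely in terms of the $L$-operation of $X$, as an identity valid in \emph{every} self-similar $L$-algebra; granting this uniform identity, the normal forms computed in $S(X)$ and in $A$ coincide and the injectivity of $\widehat{\jmath}$ follows at once.
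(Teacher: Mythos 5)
Your overall architecture (read $A\cong S(X)$ as an isomorphism under $X$, note that $L$-morphisms between self-similar $L$-algebras automatically preserve the monoid product, get surjectivity of the canonical extension $\widehat{\jmath}\colon S(X)\ra A$ from monoid generation, and reduce everything to injectivity of $\widehat{\jmath}$) is sound; note for calibration that the paper itself states Theorem~\ref{t1} without proof, importing it from \cite{Log}, Theorem~3. But your injectivity step has a genuine gap, in fact two. First, the confluence lemma you rest it on --- that $x\cdot(y_1\cdots y_n)$ collapses, uniformly in every self-similar $L$-algebra containing $X$, to an explicit $X$-product determined by the $L$-structure of $X$ alone --- is exactly the hard content of constructing $S(X)$, and you explicitly defer it (``the main obstacle is exactly this last reduction''), so as written this is a plan, not a proof. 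Second, even granting that lemma, the concluding inference ``$\widehat{\jmath}$ carries the normal form of an element to the same normal form, hence is injective'' conflates words with elements: a confluent rewriting system pins down when two words are equal in the universal object $S(X)$, but it does not by itself exclude that equality in $A$ is strictly coarser. From $p=q$ in $A$ you only learn that the two normal forms are equal \emph{as elements of $A$}; to conclude they are equal as words you would need that distinct normal words evaluate to distinct elements of $A$ --- which is precisely the injectivity you set out to prove.

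The gap can be closed without any normal forms, using the kernel machinery that the paper sets up around \eqref{8}: a morphism $f$ of $L$-algebras with $\mbox{Ker}(f)=\{1\}$ is injective, since $f(x)=f(y)$ gives $x\cdot y,\,y\cdot x\in\mbox{Ker}(f)$ and then \eqref{3} forces $x=y$. And $\mbox{Ker}(\widehat{\jmath})$ is trivial for order reasons: by \eqref{12} with $c=b$, together with \eqref{1}, one has $ab\le b$ in any self-similar $L$-algebra, and $1$ is the greatest element by \eqref{1} and \eqref{4}; hence a relation $x_1\cdots x_k=1$ in $A$ with $x_i\in X$ forces $x_k=1$ and, inductively, all $x_i=1$, so any $u\in S(X)$ with $\widehat{\jmath}(u)=1$, written as an $X$-word, is already $1$ in $S(X)$. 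Combined with your correct preliminary observations, this finishes the converse in a few lines. One caveat applies to both your route and this repair: everything leans on the universal property of $S(X)$ (unique extension of $L$-morphisms into self-similar $L$-algebras), which you ``record'' as known; verify that it is actually available in \cite{Log} in this form, because it is a strictly stronger input than the mere existence of $S(X)$, and if it is not citable, the real work of the theorem reappears inside that recorded fact.
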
 

In particular, $X$ is self-similar if and only if $X\cong S(X)$.

\begin{exa}
\label{exa:1}
Let $G$ be a {\em right $\ell$-group} \cite{Rum}, that is, a group
with a lattice order such that $a\le b\Ra ac\le bc$ holds for all $a,b,c\in G$. In other words,
the lattice operations are invariant under right multiplication. The {\em negative cone}
$G^-:=\{a\in G\:|\:a\le 1\}$ is a self-similar $L$-algebra with
$$a\cdot b:=ba^{-1}\wedge 1.$$
\end{exa}

For special cases and other examples, see \cite{Log, Gli, Rum, L-Alg}. Here we mention two
special cases which may give a rough idea of the self-similar closure. Firstly, let
$\CC(K)$ be the (two-sided) $\ell$-group of continuous real functions on a compact
space $K$, with the pointwise lattice structure. The functions $f$ with $-1\le f\le 0$ form an
$L$-subalgebra $X$ of the negative cone $\CC(K)^-$. By Theorem~\ref{t1}, $\CC(K)^-$ is the
self-similar closure of $X$. If $K$ is a singleton, $\CC(K)\cong(\R;+)$, and $X=[-1,0]$,
with logical unit 0.

\begin{exa}
\label{exa:2}
The category of $L$-algebras has a generator, the two-element
Boolean algebra $\B=\{0,1\}$. Its self-similar closure is the negative cone of the infinite
cyclic group $\langle 0\rangle$ with its natural order $0^n\le 0^m\Eq n\ge m$.
\end{exa}

By \cite{Log}, Proposition~4, every self-similar $L$-algebra $A$ is a $\wedge$-semilattice with
\begin{equation}\label{13}
a\wedge b:=(a\cdot b)a=(b\cdot a)b.  
\end{equation} 

\begin{exa}
A semilattice $X$ with a greatest element 1 and a
binary operation $(X;\cdot)$ is said to be a {\em Brouwerian semilattice} \cite{Koh}
if it satisfies
\begin{equation}\label{14}
x\wedge y\le z\;\Equ\;x\le y\cdot z.  
\end{equation} 
For a Brouwerian semilattice $X$, Eqs.~\eqref{10} and \eqref{11} give 
\[
(x\wedge y)\cdot z=
(x\cdot y)x\cdot z=(x\cdot y)\cdot(x\cdot z),
\]
which yields Eq.~\eqref{2}. Furthermore,
$x\le y\Eq 1\wedge x\le y\Eq 1\le x\cdot y$ gives \eqref{4}, and thus \eqref{3}, and it is easily
checked that 1 is a logical unit. Thus $X$ is an $L$-algebra. As an $L$-subalgebra of $S(X)$,
a Brouwerian semilattice $X$ satisfies $xy\le x\wedge y$ for all $x,y\in X$, but this 
inequality is not an equality if $|X|\ne 1$.
\end{exa}

A Brouwerian semilattice with a complete lattice order is said to be a {\em locale}
\cite{Joh, MM}. Equivalently, a locale is a complete lattice $A$ satisfying 
$$a\wedge\bigvee_{i\in I}a_i=\bigvee_{i\in I}(a\wedge a_i).$$
The $L$-algebra operation is then given by
\[
a\cdot b:=\bigvee\{c\in A\:|\:c\wedge a\le b\}.
\]

An element $p<1$ of an $L$-algebra $X$ is said to be {\em prime} \cite{Dual, L-Alg} if  
for all $x\in X$, either $x\le p$ or $x\cdot p\le p$. The set of prime elements of $X$ is
denoted by $P(X)$. For a Brouwerian semilattice $X$, an element $p<1$ is prime if and only if 
\begin{equation}\label{15}
x\wedge y\le p\;\Lra\; (x\le p\text{ or } y\le p)  
\end{equation}
holds in $X$. Indeed, if $p$ satisfies \eqref{15}, then \eqref{14} implies that
$(x\cdot p)\wedge x\le p$, which yields $x\cdot p\le p$ or $x\le p$. Conversely, if
$p\in P(X)$, then $x\wedge y\le p$ and $y\not\le p$ implies that $x\le y\cdot p\le p$.

\begin{Definition}
We call an $L$-algebra $X$ {\em spatial} if 
\[
x=\bigwedge\{p\in P(X)\:|\:x\le p\}
\]
holds
for all $x\in X$.  
\end{Definition}

Thus, a locale is spatial \cite{Joh} if and only if it is spatial as an $L$-algebra.    
For an $L$-algebra $X$, the partially ordered set $\II(X)$ of ideals is closed with
respect to intersections $\bigcap S$ of subsets $S\subs\II(X)$. Hence $\II(X)$
is a complete lattice.

\section{The spectrum of an $L$-algebra} 

In this section, we prove that the lattice of ideals of an $L$-algebra $X$ is distributive,
which settles an open problem in \cite{TopL}. As a consequence, this will imply that $\II(X)$
is a spatial locale. Our proof makes use of the following correspondence between $\II(X)$
and $\II(S(X))$ (see the corollaries of \cite{Gli}, Theorem~1):

\begin{thm}
\label{t2}
Let $X$ be an $L$-algebra. The maps $I\mapsto S(I)$ and $J\cap X\mapsfrom J$ establish a
bijective correspondence between the ideals $I$ of $X$ and the ideals $J$ of $S(X)$,
where the self-similar closure $S(I)$ is equal to the ideal of $S(X)$ generated by $I$.
\end{thm}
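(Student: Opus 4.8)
The plan is to realize both maps through the reflection $S$ of $L$-algebras onto self-similar $L$-algebras and the resulting good behaviour of quotients. Write $\Phi(I)$ for the ideal of $S(X)$ generated by $I$ and $\Psi(J):=J\cap X$. First I record three facts. (i) In any self-similar $A$ one has $ab\le b$, since by \eqref{12} this means $a\le b\cdot b=1$. (ii) A morphism of self-similar $L$-algebras automatically preserves the monoid product, because by \eqref{12} the product $ab$ is determined by $\cdot$ and $\le$, both of which are preserved. (iii) If $K$ is an ideal of a self-similar $A$, then $A/K$ is again self-similar: the maps $\sigma_y$ are injective in every $L$-algebra, while surjectivity of $\sigma_{\bar y}$ on $A/K$ is inherited from $A$ through the surjective $L$-morphism $A\twoheadrightarrow A/K$ (lift $\bar x,\bar y$, solve $y\cdot z=x$ with $z\le y$, and project); once $A/K$ is known self-similar, (ii) shows this projection preserves the product. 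I also use, from \cite{Log}, that $X\hookrightarrow S(X)$ is injective and that every morphism from $X$ into a self-similar $L$-algebra extends uniquely along $X\hookrightarrow S(X)$. Finally, since $X$ is an $L$-subalgebra of $S(X)$ and the ideal axioms \eqref{5}--\eqref{7} involve only $\cdot$, the set $J\cap X$ is an ideal of $X$, so $\Psi$ is well defined; \eqref{7} also shows every ideal is an $L$-subalgebra, so $S(I)$ is defined.

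The heart of the argument is to compute $\Phi(I)$ for an ideal $I$ of $X$. Applying the extension property to $X\to X/I\hookrightarrow S(X/I)$ yields a morphism $\bar q\colon S(X)\to S(X/I)$ restricting to the canonical map on $X$, which preserves the product by (ii). Since $X$ generates $S(X)$ as a monoid (Theorem~\ref{t1}), the image of $\bar q$ is generated by $X/I$, so $\bar q$ is surjective and $S(X)/\mathrm{Ker}\,\bar q\cong S(X/I)$. Because $X/I\hookrightarrow S(X/I)$ is injective, $\mathrm{Ker}\,\bar q\cap X=I$. As $I\subseteq\mathrm{Ker}\,\bar q$ and the latter is an ideal, $\Phi(I)\subseteq\mathrm{Ker}\,\bar q$, so intersecting with $X$ gives $\Phi(I)\cap X=I$; this is $\Psi\Phi=\mathrm{id}$.

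To identify $\mathrm{Ker}\,\bar q$ with $\Phi(I)$ and with $S(I)$, let $M$ be the submonoid of $S(X)$ generated by $I$. The inclusion $M\subseteq\mathrm{Ker}\,\bar q$ holds because $I$ maps to $1$. For the reverse, write $a=x_1\cdots x_n\in\mathrm{Ker}\,\bar q$ with $x_i\in X$; by (i) the product of the classes $\bar x_i$ in $S(X/I)$ lies below $\bar x_n$, so $\bar x_n=1$, i.e. $x_n\in\mathrm{Ker}\,\bar q\cap X=I$, and then $x_1\cdots x_{n-1}\in\mathrm{Ker}\,\bar q$ as well, whence $a\in M$ by induction. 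Thus $\mathrm{Ker}\,\bar q=M=\Phi(I)$. This set is self-similar, because for $y\in\mathrm{Ker}\,\bar q$ the solution $z\le y$ of $y\cdot z=x$ with $x\in\mathrm{Ker}\,\bar q$ again satisfies $\bar q(z)=1$; being monoid-generated by $I$, it is $\cong S(I)$ by Theorem~\ref{t1}.

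It remains to prove $\Phi\Psi=\mathrm{id}$. For an ideal $J$ of $S(X)$ put $I:=J\cap X$, so $\Phi(I)\subseteq J$ is clear. By (iii), $S(X)/J$ is self-similar and generated by the image of $X$, whose kernel is $J\cap X=I$; hence Theorem~\ref{t1} gives $S(X)/J\cong S(X/I)\cong S(X)/\Phi(I)$. Since $\Phi(I)\subseteq J$, the induced surjection $S(X)/\Phi(I)\twoheadrightarrow S(X)/J$ is, under these isomorphisms, a self-similar endomorphism of $S(X/I)$ fixing the monoid generators $X/I$, hence the identity; therefore $J=\Phi(I)$. Both maps are order-preserving, so the correspondence is an isomorphism of lattices. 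The main obstacle is the interface between the $L$-algebra ideal structure and the monoid structure of $S(X)$: one must guarantee that quotients by ideals stay self-similar so that Theorem~\ref{t1} can be used to identify them, and the inequality $ab\le b$ from (i) is precisely what powers the peeling argument establishing $\mathrm{Ker}\,\bar q=M$.
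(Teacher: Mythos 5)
The paper never proves Theorem~\ref{t2} internally: it is imported from the corollaries of \cite{Gli}, Theorem~1 (``see the corollaries of \cite{Gli}, Theorem~1''), so your argument must be judged as a free-standing reconstruction. As such it is essentially sound, and its architecture is the natural one (and, in substance, the one of the cited source): extend the quotient map $X\tra X/I$ to $\bar q\colon S(X)\to S(X/I)$, identify $\mathrm{Ker}\,\bar q$ with the submonoid $M$ generated by $I$ via the peeling argument powered by $ab\le b$, deduce $\Psi\Phi=\mathrm{id}$ from $\mathrm{Ker}\,\bar q\cap X=I$, realize $S(I)$ inside $S(X)$ via Theorem~\ref{t1}, and obtain $\Phi\Psi=\mathrm{id}$ from the rigidity of a self-similar endomorphism of $S(X/I)$ fixing the monoid generators $X/I$. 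Your facts (i)--(iii) are all correct ((iii) in effect recovers the paper's Proposition~\ref{p1}), and the compressed justification of (ii) is fine once unpacked: $f(ab)$ satisfies $f(b)\cdot f(ab)=f(a)$ and $f(ab)\le f(b)$, and such an element is unique in a self-similar codomain.

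Two points need attention. First, in ``Thus $\mathrm{Ker}\,\bar q=M=\Phi(I)$'' you have only established $\Phi(I)\subseteq\mathrm{Ker}\,\bar q=M$; the missing inclusion $M\subseteq\Phi(I)$ requires that ideals of a self-similar $L$-algebra are closed under the monoid product, so that the submonoid generated by $I$ sits inside the ideal generated by $I$. This is exactly Proposition~\ref{p1} of the paper (whose proof uses only \eqref{9} and \eqref{10}, hence no circularity), or it follows from your own (ii)--(iii) applied to the projection $S(X)\tra S(X)/\Phi(I)$; either way the step must be said, since the theorem's final clause is precisely the identification of $S(I)$ with the \emph{generated ideal}. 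Second, the load-bearing external input is the extension property ``every morphism from $X$ into a self-similar $L$-algebra extends uniquely along $X\hra S(X)$.'' Uniqueness is indeed immediate from (ii) and monoid generation, but existence is a genuine theorem, and your attribution to \cite{Log} is shaky: \cite{Log} constructs $S(X)$ and proves Theorem~\ref{t1}, whereas the functoriality/reflection property of $S$ is established in \cite{Gli} on the way to its Theorem~1 --- the very result being proved here. The fact is true (self-similar $L$-algebras form a variety by \eqref{9}--\eqref{11}, and Theorem~\ref{t1} identifies $S(X)$ with the relatively free self-similar algebra on $X$, since the submonoid generated by an $L$-subalgebra is closed under the arrow operation), so the gap is one of attribution rather than mathematics; but if you want a proof independent of \cite{Gli}, you must construct $\bar q$ for the surjection $X\tra X/I$ yourself rather than cite it. With these two repairs the proof is complete.
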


In what follows, we
write $x\equiv y$ (mod $I$) for the congruence modulo an $L$-algebra
ideal $I$. For ideals $I$ and $J$ of an $L$-algebra $X$, we write $I\vee J$
for the join in $\II(X)$.

\begin{cor}
Let $I$ be an ideal of an $L$-algebra $X$, and $x,y\in X$.
Then $x\equiv y$ \rm (mod $I$) \it if and only if $x\equiv y$ \rm (mod $S(I)$).
\end{cor}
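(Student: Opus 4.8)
The plan is to reduce the claim to the single fact $S(I)\cap X=I$, which is built into the bijective correspondence of Theorem~\ref{t2}.

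First I would observe that, because $X$ is an $L$-subalgebra of $S(X)$, the operation of $S(X)$ restricts to that of $X$; in particular, for $x,y\in X$ the elements $x\cdot y$ and $y\cdot x$ lie in $X$ and are the same whether computed in $X$ or in $S(X)$. Consequently both congruence conditions in \eqref{8} refer only to membership of elements of $X$ in an ideal.

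Next I would unwind \eqref{8}. The relation $x\equiv y$ (mod $I$) means $x\cdot y\in I$ and $y\cdot x\in I$, while $x\equiv y$ (mod $S(I)$) means $x\cdot y\in S(I)$ and $y\cdot x\in S(I)$. Since $x\cdot y,\,y\cdot x\in X$, both biconditionals follow at once from the claim that for every $z\in X$ one has $z\in I\Leftrightarrow z\in S(I)$.

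Finally, this equivalence is precisely the statement $S(I)\cap X=I$. The inclusion $I\subseteq S(I)$ holds because $S(I)$ is the ideal of $S(X)$ generated by $I$, giving the direction (mod $I$) $\Rightarrow$ (mod $S(I)$). For the converse, an element $z\in X$ with $z\in S(I)$ lies in $S(I)\cap X$, and since by Theorem~\ref{t2} the map $J\mapsto J\cap X$ is inverse to $I\mapsto S(I)$, we have $S(I)\cap X=I$, whence $z\in I$. No step here presents a genuine obstacle: the corollary is essentially a reformulation of the correspondence in Theorem~\ref{t2}, and the only point requiring care is the (immediate) observation that the two $L$-algebra operations agree on $X$.
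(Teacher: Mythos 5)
Your proof is correct and follows exactly the paper's route: the paper also derives the corollary immediately from Theorem~\ref{t2}, and your write-up simply makes explicit the reduction to $S(I)\cap X=I$ together with the observation that $x\cdot y$ and $y\cdot x$ lie in $X$. Nothing is missing; you have merely spelled out the details the paper leaves implicit.
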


\begin{proof}
It follows immediately from Theorem \ref{t2}.
\end{proof}

\begin{prop}
\label{p1}
Each congruence of a self-similar $L$-algebra $A$ is a congruence of $A$ as a monoid.
\end{prop}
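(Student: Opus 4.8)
The plan is to attach to the given congruence its ideal $I=\{w\in A\:|\:w\equiv 1\}$, so that by \eqref{8} the relation $x\equiv y$ is the same as $x\cdot y\equiv 1$ and $y\cdot x\equiv 1$, and then to verify compatibility with right and with left monoid translations separately. Since $A$ is self-similar it is a monoid, so all the products $xz,\,zx,\,zy,\dots$ below exist; what has to be shown is that $x\equiv y$ implies $xz\equiv yz$ and $zx\equiv zy$ for every $z\in A$. Throughout I will freely use that $\equiv$, being a congruence of the $L$-algebra, satisfies $u\equiv v\Ra w\cdot u\equiv w\cdot v$ and $u\equiv v\Ra u\cdot w\equiv v\cdot w$.

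Compatibility with right translations is a formal consequence of \eqref{9} and \eqref{10}, with no appeal to $I$ at all. Applying \eqref{10} with $a=x$, $b=z$, $c=yz$ and then \eqref{9} in the form $z\cdot(yz)=y$ gives the identity $(xz)\cdot(yz)=x\cdot\bigl(z\cdot(yz)\bigr)=x\cdot y$, and likewise $(yz)\cdot(xz)=y\cdot x$. Hence $x\equiv y$, i.e. $x\cdot y\equiv 1\equiv y\cdot x$, immediately yields $(xz)\cdot(yz)\equiv 1\equiv(yz)\cdot(xz)$, that is $xz\equiv yz$.

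The left translations are the crux, and I expect them to be the only real obstacle. The warning sign comes from the group model $a\cdot b=ba^{-1}\wedge 1$ of Example~\ref{exa:1}: there $(zx)\cdot(zy)$ works out to a conjugate of $x\cdot y$ rather than $x\cdot y$ itself, which suggests that compatibility ought to require a conjugation-invariance property of ideals extracted from \eqref{7}. The device that avoids this is to push the mixed product back into the operation $\cdot$, where compatibility is already guaranteed. By \eqref{10} with $a=z$, $b=x$, $c=zy$ one has $(zx)\cdot(zy)=z\cdot\bigl(x\cdot(zy)\bigr)$. Now \eqref{9} gives $y\cdot(zy)=z$, so from $x\equiv y$ and compatibility of $\equiv$ with $\cdot$ we get $x\cdot(zy)\equiv y\cdot(zy)=z$, and applying compatibility once more yields $z\cdot\bigl(x\cdot(zy)\bigr)\equiv z\cdot z=1$. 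Thus $(zx)\cdot(zy)\equiv 1$, and interchanging the roles of $x$ and $y$ gives $(zy)\cdot(zx)\equiv 1$ as well, so $zx\equiv zy$ by \eqref{8}. Together with the previous paragraph this shows that $\equiv$ is a monoid congruence, completing the proof.
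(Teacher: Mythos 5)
Your proof is correct and follows essentially the same route as the paper's: right translations via the identity $(xz)\cdot(yz)=x\cdot\bigl(z\cdot(yz)\bigr)=x\cdot y$ from \eqref{9} and \eqref{10}, and left translations by rewriting $(zx)\cdot(zy)=z\cdot\bigl(x\cdot(zy)\bigr)$ and using compatibility together with $y\cdot(zy)=z$ to get $\equiv z\cdot z=1$. The paper's version is just a terser rendering of the same two computations, so there is nothing to add.
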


\begin{proof}
Let $\equiv$ be a congruence of $A$ as an $L$-algebra, and let $I$ be the corresponding
ideal. Assume that $a\equiv b$. For any $c\in A$, Eqs.~\eqref{9} and \eqref{10} give
$ac\cdot bc=a\cdot(c\cdot bc)=a\cdot b\in I$. By symmetry, this implies that $ac\equiv bc$.
Similarly, $ca\cdot cb=c\cdot(a\cdot cb)\equiv c\cdot(b\cdot cb)=c\cdot c=1$, which yields
$ca\cdot cb\in I$. By symmetry, $ca\equiv cb$. 
\end{proof}

The converse of Proposition~\ref{p1} is not true. 

\begin{exa}
Let $Z:=\{x^n\:|\:n\in\Z\}$ be the infinite cyclic group with the partial order 
\[
x^n\le x^m
\Eq n\ge m.
\]
Its negative cone $Z^-$ is a self-similar $L$-algebra (Example~\ref{exa:2}). The
subsets $\{1\}$ and $Z^-\setm\{1\}$ are the equivalence classes of a congruence relation
of the monoid $Z^-$. However, $x\cdot x=1\not\equiv x=x\cdot x^2$, which shows that $\equiv$
is not a congruence for the $L$-algebra $Z^-$.
\end{exa}

\begin{prop}
\label{p2}
Let $I$ and $J$ be ideals of an $L$-algebra $X$. Then $y\in X$ belongs to $I\vee J$ if and
only if there is an element $x\in I$ with $x\equiv y$ \rm (mod $J$). \it     
\end{prop}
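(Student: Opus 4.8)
The plan is to set $K:=\{y\in X\mid x\equiv y\ (\text{mod }J)\text{ for some }x\in I\}$ and prove $K=I\vee J$ by two inclusions. The inclusion $K\subseteq I\vee J$ is immediate: if $x\in I$ and $x\equiv y$ (mod $J$), then $x\in I\subseteq I\vee J$ and $x\cdot y\in J\subseteq I\vee J$, so \eqref{5} applied in the ideal $I\vee J$ forces $y\in I\vee J$. For the reverse inclusion it suffices to show that $K$ is an ideal containing $I\cup J$, since $I\vee J$ is the least such ideal. Here $1\in K$ and $I\subseteq K$ because $x\equiv x$, while $J\subseteq K$ follows from the witness $x=1\in I$ together with $1\cdot y=y$. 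Conditions \eqref{6} and \eqref{7} for $K$ are routine: a witness $x\in I$ for $y$ is transported along the operation by the compatibility of congruence noted after \eqref{8}, e.g.\ $x\equiv y$ yields $(x\cdot z)\cdot z\equiv(y\cdot z)\cdot z$ with $(x\cdot z)\cdot z\in I$ by \eqref{6}, so $(y\cdot z)\cdot z\in K$; and likewise for \eqref{7}.

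The whole difficulty is concentrated in axiom \eqref{5} for $K$. Assume $y\in K$ and $y\cdot z\in K$, and choose witnesses $x_1\equiv y$, $x_2\equiv y\cdot z$ with $x_1,x_2\in I$. Compatibility of the congruence gives $x_1\cdot z\equiv y\cdot z\equiv x_2$ (mod $J$). I claim that
\[
s:=\bigl(x_2\cdot(x_1\cdot z)\bigr)\cdot z
\]
is a witness for $z$. Writing $w:=x_2\cdot(x_1\cdot z)$, the relation $x_1\cdot z\equiv x_2$ gives $w\equiv x_2\cdot x_2=1$, hence $w\in J$ and $s=w\cdot z\equiv 1\cdot z=z$ (mod $J$). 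Thus $s\equiv z$ (mod $J$), and everything reduces to showing $s\in I$.

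This last step is the crux, and it is the only place where the self-similar closure enters. By the Corollary the congruence mod $J$ is the restriction to $X$ of the congruence mod $S(J)$, which by Proposition~\ref{p1} is a congruence of the monoid $S(X)$. An ideal of a self-similar $L$-algebra is a submonoid: if $a,b\in S(I)$ then $a\equiv 1$ and $b\equiv 1\ (\text{mod }S(I))$, so $ab\equiv 1$ and $ab\in S(I)$. Hence $x_2x_1\in S(I)$, and \eqref{10} rewrites $w=x_2\cdot(x_1\cdot z)=(x_2x_1)\cdot z$, so that $s=\bigl((x_2x_1)\cdot z\bigr)\cdot z\in S(I)$ by \eqref{6}. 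On the other hand $s$ is formed from $x_1,x_2,z\in X$ using only the operation of $X$, so $s\in X$; Theorem~\ref{t2} then gives $s\in S(I)\cap X=I$. Therefore $s\in I$ with $s\equiv z$ (mod $J$), so $z\in K$, which establishes \eqref{5} and completes the proof.

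The main obstacle, as this shows, is precisely axiom \eqref{5}: direct manipulation inside $X$ produces an element congruent to $z$ only after multiplying by the product $x_2x_1$, which a priori lives in $S(X)$ rather than $X$. The decisive observation is that the specific element $s$, although recognized as a member of $S(I)$ through \eqref{10} and the submonoid property, does not in fact leave $X$, so Theorem~\ref{t2} pulls it back into $I$.
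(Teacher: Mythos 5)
Your proof is correct, and its skeleton coincides with the paper's: you form the same candidate set, reduce everything to axiom \eqref{5}, and after renaming (your $x_1,x_2,z$ are the paper's $z,t,y$) your witness $s=\bigl((x_2x_1)\cdot z\bigr)\cdot z$ is literally the paper's element $(tz\cdot y)\cdot y$, whose membership in $I$ you establish exactly as the paper does: $x_2x_1\in S(I)$, Eqs.~\eqref{10} and \eqref{6} applied in $S(X)$, and $S(I)\cap X=I$ from Theorem~\ref{t2} (you even make explicit the submonoid property of $S(I)$, which the paper leaves implicit). Where you genuinely diverge is the $J$-side. The paper multiplies the two congruences inside $S(X)$ (Proposition~\ref{p1} applied to mod $S(J)$), then runs a lattice computation with \eqref{12} and \eqref{13} to get $tz\cdot(x\cdot y)x\le tz\cdot y$, concludes $tz\cdot y\in S(J)\cap X=J$, and finally cites \eqref{6} and \eqref{7} for $(tz\cdot y)\cdot y\equiv y$ (mod $J$). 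You instead stay entirely inside $X$: from $x_1\cdot z\equiv x_2$ (mod $J$) you get $w=x_2\cdot(x_1\cdot z)\equiv x_2\cdot x_2=1$, hence $w\in J$, and $s=w\cdot z\equiv z$ follows at once from the compatibility stated after \eqref{8}. This is a modest but real simplification: it eliminates the inequality \eqref{12}, the meet computation \eqref{13}, and any use of $S(J)$ whatsoever, confining the self-similar closure to the single step $x_2x_1\in S(I)$; in particular your opening appeal to the Corollary and to Proposition~\ref{p1} for the congruence mod $S(J)$ is unnecessary in your own argument and could be deleted. Conversely, the paper's longer route buys nothing essential here beyond exhibiting how the semilattice structure of $S(X)$ can be used; your version is the leaner of the two.
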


\begin{proof}
Note first that $I$ and $J$ are contained in the set $V$ of all $y\in X$ which admit an 
element $x\in I$ with $x\equiv y$ (mod $J$). On the other hand, \eqref{5} implies that $V
\subs I\vee J$. So we only have to show that $V$ is an ideal. To verify \eqref{5}, let
$x,y\in X$ be elements with $x\in V$ and $x\cdot y\in V$. So there exist $z,t\in I$ with $z\equiv
x$ (mod $J$) and $t\equiv x\cdot y$ (mod $J$). By Proposition~\ref{p1}, this implies that
$tz\equiv (x\cdot y)x$ (mod $S(J)$). Hence $tz\cdot(x\cdot y)x\in S(J)$. By \eqref{12}, 
$(x\cdot y)x\le y$. Thus Eqs.~\eqref{13} yield $\bigl(tz\cdot(x\cdot y)x\bigr)tz=
tz\wedge(x\cdot y)x\le y$. So we obtain $tz\cdot(x\cdot y)x\le tz\cdot y$, which gives
$tz\cdot y\in S(J)$. By Eq.~\eqref{10}, it follows that $tz\cdot y=t\cdot(z\cdot y)\in
S(J)\cap X=J$.
By \eqref{6} and \eqref{7}, this yields $(tz\cdot y)\cdot y\equiv y$ (mod $J$). Since
$(tz\cdot y)\cdot y\in S(I)\cap X=I$, it follows  that $y\in V$. Thus $V$
satisfies \eqref{5}.  

Next assume that $x\in V$ and $y\in X$. So there exists an element $z\in I$ with $z\equiv 
x$ \mbox{(mod $J$)}. Then $(x\cdot y)\cdot y\equiv(z\cdot y)\cdot y$ \mbox{(mod $J$)} and 
$(z\cdot y)\cdot y\in I$. So $(x\cdot y)\cdot y\in V$. This shows that $V$ satisfies 
\eqref{6}. Similarly, $y\cdot x\equiv y\cdot z$ \mbox{(mod $J$)} and 
$y\cdot(x\cdot y)\equiv y\cdot(z\cdot y)$ \mbox{(mod $J$)}. Since $y\cdot z\in I$ and
$y\cdot(z\cdot y)\in I$, this completes the proof that $V$ is an ideal.
\end{proof}

In particular, the characterization of $I\vee J$ in Proposition~\ref{p2} is symmetric
with respect to $I$ and $J$. Now we are ready to prove our main result.

\begin{thm}
\label{t3}
The lattice of ideals of an $L$-algebra $X$ is distributive. 
\end{thm}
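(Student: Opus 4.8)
The plan is to verify a single distributive law, namely
\[
I\cap(J\vee K)\subs(I\cap J)\vee(I\cap K),
\]
since the reverse inclusion $(I\cap J)\vee(I\cap K)\subs I\cap(J\vee K)$ holds in every lattice and the two distributive identities are equivalent in any lattice. As $\II(X)$ is closed under intersections, $I\cap J$ and $I\cap K$ are again ideals, so both sides are well defined. I would fix $y\in I\cap(J\vee K)$ and, applying Proposition~\ref{p2} to $J\vee K$, choose $x\in J$ with $x\equiv y$ (mod $K$). The goal then becomes, via Proposition~\ref{p2} applied to the pair $I\cap J$ and $I\cap K$, to produce a single element $w\in I\cap J$ with $w\equiv y$ (mod $I\cap K$); here I use that, by \eqref{8}, congruence mod $I\cap K$ is exactly congruence mod $I$ together with congruence mod $K$.

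The witness I propose is $w:=(x\cdot y)\cdot y$, and I would check three properties. First, $w\in J$ is immediate from \eqref{6}, since $x\in J$. Second, $w\in I$: under the quotient morphism $X\tra X/I$ the image of $y$ is $1$ (as $y\in I$), so $w$ maps to $(\bar x\cdot 1)\cdot 1=1\cdot 1=1$ by the logical unit axioms \eqref{1}, whence $w$ lies in the kernel $I$. Third, $w\equiv y$ (mod $K$): under $X\tra X/K$ the images of $x$ and $y$ coincide (as $x\equiv y$ mod $K$), so $w$ maps to $(\bar y\cdot\bar y)\cdot\bar y=1\cdot\bar y=\bar y$, i.e.\ $w\equiv y$ (mod $K$). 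Since $w,y\in I$ yields $w\equiv y$ (mod $I$) by \eqref{7}, we obtain $w\equiv y$ (mod $I\cap K$), together with $w\in I\cap J$, and Proposition~\ref{p2} places $y$ in $(I\cap J)\vee(I\cap K)$, completing the inclusion.

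The main obstacle is precisely the discovery of the witness $w$, because the congruence $x\equiv y$ (mod $K$) is the only link between the data sitting in $I$ and the data sitting in $J$. The two obvious combinations of $x$ and $y$ both fail: the meet $x\wedge y$ lies in $I$ only if $x\in I$ and in $J$ only if $y\in J$, and the monoid product $xy$ in $S(X)$ has the identical defect, so neither can be forced into $I\cap J$. The merit of $(x\cdot y)\cdot y$ is that it is engineered so that \emph{substituting $y\mapsto 1$ collapses it to $1$}, which drives it into $I$, while \emph{substituting $x\mapsto y$ collapses it to $y$}, which preserves the class of $y$ modulo $K$; and simultaneously it already has the shape $(x\cdot y)\cdot y$ demanded by the ideal axiom \eqref{6} to remain in $J$. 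Once this element is in hand the remaining verifications reduce to routine applications of \eqref{1}, \eqref{6}, \eqref{7} and the definition \eqref{8}, and, notably, no recourse to the self-similar closure $S(X)$ is required.
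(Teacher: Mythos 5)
Your proof is correct and takes essentially the same route as the paper's: reduce to a single inclusion, use Proposition~\ref{p2} to extract $x$, then manufacture an explicit witness that Proposition~\ref{p2} converts back into membership in the refined join, with all verifications resting on \eqref{1} and \eqref{5}--\eqref{8} --- and, like yours, the paper's argument at this stage makes no use of $S(X)$, which is hidden entirely in the proof of Proposition~\ref{p2}. The only difference is the witness: in your notation the paper takes the simpler element $x\cdot y$, which lies in $I\cap K$ (by the congruence and \eqref{7}) and is congruent to $y$ modulo $I\cap J$ by \eqref{6} and \eqref{7}, i.e.\ the mirror image, via the symmetry of Proposition~\ref{p2}, of your $(x\cdot y)\cdot y\in I\cap J$ congruent to $y$ modulo $I\cap K$.
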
 

\begin{proof}
Let $I,J,K$ be ideals of $X$. To show that $(I\vee J)\cap K=(I\cap K)\vee(J\cap K)$,
it suffices to verify the inclusion $(I\vee J)\cap K\subs(I\cap K)\vee(J\cap K)$. Let $z\in
(I\vee J)\cap K$ be given. By Proposition~\ref{p2}, there exists an element $x\in I$ with
$x\equiv z$ (mod $J$). Hence $x\cdot z\in J\cap K$. Since $x\in I$, \eqref{6} and \eqref{7}
imply that $x\cdot z\equiv z$ (mod $I$). Thus, $x\cdot z\in K$ and $z\in K$ yields
$x\cdot z\equiv z$ (mod $I\cap K$). By Proposition~\ref{p2}, this proves that $z\in
(I\cap K)\vee(J\cap K)$. 
\end{proof}

\begin{cor}
For any $L$-algebra $X$, the lattice $\II(X)$ of ideals is
a spatial locale. The $L$-algebra operation of $\II(X)$ is given by
$$I\cdot J=\{x\in X\:|\:\langle x\rangle\cap I\subs J\},$$ 
where $\langle x\rangle$ denotes the ideal generated by $x$. \rm
\end{cor}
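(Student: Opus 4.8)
The plan is to treat the two assertions separately, the first being essentially a citation and the second a formal consequence of the locale structure. For the claim that $\II(X)$ is a spatial locale, I would first note that $\II(X)$ is an algebraic lattice, its compact elements being the finitely generated ideals, since the closure conditions \eqref{5}--\eqref{7} are finitary and $\II(X)$ is closed under intersections and directed unions. In an algebraic lattice, finite distributivity forces the infinite distributive law $A\cap\bigvee_i B_i=\bigvee_i(A\cap B_i)$ defining a locale: a compact $C\subs A\cap\bigvee_i B_i$ lies below a finite subjoin $B_{i_1}\vee\cdots\vee B_{i_n}$, and Theorem~\ref{t3} then gives $C\subs(A\cap B_{i_1})\vee\cdots\vee(A\cap B_{i_n})\subs\bigvee_i(A\cap B_i)$; taking the join over all such $C$ yields the law. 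Thus Theorem~\ref{t3} makes $\II(X)$ a locale, and its spatiality is exactly what \cite{TopL} establishes once distributivity is available.

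For the operation formula I would avoid computing the join $\bigvee\{C\in\II(X)\:|\:C\cap I\subs J\}$ directly and instead use the universal property of the locale operation $I\cdot J$. The key is the adjunction $C\subs I\cdot J\Equ C\cap I\subs J$, valid in any locale. Its nontrivial direction rests on the distributive law just secured: if $C\subs I\cdot J$ then
\[
C\cap I\subs(I\cdot J)\cap I=\bigvee\{C'\cap I\:|\:C'\in\II(X),\,C'\cap I\subs J\}\subs J.
\]

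Next I would specialize $C$ to a principal ideal. For every $x\in X$ and every ideal $A$ one has $x\in A\Equ\langle x\rangle\subs A$, because $\langle x\rangle$ is the least ideal containing $x$. Taking $A=I\cdot J$ and then applying the adjunction with $C=\langle x\rangle$ produces the chain
\[
x\in I\cdot J\;\Equ\;\langle x\rangle\subs I\cdot J\;\Equ\;\langle x\rangle\cap I\subs J,
\]
which is precisely the asserted description of $I\cdot J$. A welcome by-product is that the right-hand set is then automatically an ideal, so the axioms \eqref{5}--\eqref{7} need not be checked for it by hand.

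I do not anticipate a serious obstacle: the real content sits in Theorem~\ref{t3} and in \cite{TopL}, while the displayed formula is a formal unwinding of the locale adjunction. The one step requiring genuine care is the upgrade from finite to infinite distributivity, which must invoke the algebraicity of $\II(X)$; without it the nontrivial direction of the adjunction, and hence the whole formula, would not go through.
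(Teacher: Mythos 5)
Your proof is correct, and it is genuinely more self-contained than the paper's, whose entire proof is a one-line citation (nominally to Theorem~\ref{t2}, apparently a slip for Theorem~\ref{t3} together with \cite{TopL}, where it is shown that distributivity of $\II(X)$ makes it a spatial locale with this $L$-algebra operation). What you do differently is reconstruct in-house the two steps the paper delegates to \cite{TopL}: the upgrade from the finite distributivity of Theorem~\ref{t3} to the frame law, and the unwinding of the Heyting implication into the displayed formula. Both steps check out: the finitely generated ideals are indeed the compact elements, since the defining conditions \eqref{5}--\eqref{7} are finitary, so directed unions of ideals are ideals and every ideal is the directed join of its finitely generated subideals; your compact-element argument for $A\cap\bigvee_i B_i\subs\bigvee_i(A\cap B_i)$ is the standard and correct one; and the adjunction $C\subs I\cdot J\Equ C\cap I\subs J$, specialized to $C=\langle x\rangle$ via $x\in A\Equ\langle x\rangle\subs A$, does yield the formula, with the bonus you note that $\{x\in X\:|\:\langle x\rangle\cap I\subs J\}$ is automatically an ideal. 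The one remaining external input in your version, spatiality, is cited from \cite{TopL} exactly as the paper intends, so no gap arises; but it is worth observing that your algebraicity argument would also deliver spatiality for free, since every algebraic frame is spatial --- precisely by the Zorn's lemma construction the paper itself sketches just before \eqref{16}, producing for each $x\notin I$ a prime ideal $P\sups I$ maximal with $x\notin P$. In short: the paper buys brevity by outsourcing everything, while your route buys a complete proof whose only genuinely cited ingredient could even be eliminated.
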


\begin{proof}
It follows immediately from 
Theorem~\ref{t2}.
\end{proof}

Equivalently, $I\cdot J$ is the greatest ideal $K$ with $K\cap I\subs J$. 

An ideal $P$ is prime if for every ideal $I$ 
either $I\subseteq P$ or $I\cdot P\subseteq P$.
The prime ideals
of $\II(X)$ form a topological space $\mbox{Spec}\,X:=P(\II(X))$, the {\em spectrum} of $X$,
such that the map $I\mapsto U_I$ with 
$$U_I:=\{P\in\mbox{Spec}\,X\:|\:I\nsubs P\}$$
gives a bijection from $\II(X)$ onto the open sets of $\mbox{Spec}\,X$ (see \cite{TopL},
corollary of Proposition~10). Recall that a topological space is said to be {\em sober}
\cite{MM} if every non-empty closed irreducible set $A$ has a unique {\em generic point} $x$, 
that is, $A$ is the closure of $\{x\}$. 

\begin{prop}
\label{p3}
Let $X$ be an $L$-algebra. The spectrum of $X$ is a sober $T_0$-space
such that the set \rm
$\DD(\mbox{Spec}\,X)$ \it of quasicompact open sets is a basis of \rm $\mbox{Spec}\,X$. 
\end{prop}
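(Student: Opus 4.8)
The plan is to read everything off the isomorphism $I\mapsto U_I$ from $\II(X)$ onto the lattice of open sets, which by the corollary to Theorem~\ref{t3} is an isomorphism of spatial locales. Write $V_I:=\mbox{Spec}\,X\setm U_I=\{P\in\mbox{Spec}\,X\:|\:I\subs P\}$; since $I\mapsto U_I$ is a bijection onto the opens, every closed set arises this way. As the map is a lattice isomorphism, $U_I\subs U_J\Equ I\subs J$ and $\bigcup_\alpha U_{I_\alpha}=U_{\bigvee_\alpha I_\alpha}$, hence $V_I\subs V_L\Equ L\subs\bigcap V_I$, and spatiality of $\II(X)$ gives $\bigcap V_I=\bigcap\{P\in P(\II(X))\:|\:I\subs P\}=I$. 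These identities form the bridge between the topology and the lattice. The $T_0$ axiom is then immediate: if $P\ne Q$ are prime ideals, then $P\nsubs Q$ or $Q\nsubs P$, and in the first case $U_P$ contains $Q$ but not $P$.

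For sobriety, let $A=V_I$ be a non-empty closed irreducible set. The starting observation is that $V_J\cup V_K=V_{J\cap K}$ on $\mbox{Spec}\,X$, because $J\subs P$ or $K\subs P$ is equivalent to $J\cap K\subs P$ by primeness of the point $P$, i.e.\ by \eqref{15} applied in the locale $\II(X)$. Using $V_I\subs V_L\Equ L\subs I$, the relation $A\subs V_J\cup V_K=V_{J\cap K}$ becomes $J\cap K\subs I$, while $A\subs V_J$ and $A\subs V_K$ become $J\subs I$ and $K\subs I$. Thus irreducibility of $A$ says precisely that $J\cap K\subs I$ implies $J\subs I$ or $K\subs I$, which is \eqref{15} for $I$; together with $I\ne X$ (forced by $A\ne\noth$, since $V_X=\noth$) this makes $I$ a prime element, i.e.\ $I\in\mbox{Spec}\,X$. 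Since the smallest closed set containing a point $P$ is $V_P$, we have $\overline{\{I\}}=V_I=A$, so $I$ is a generic point; and it is unique because $\overline{\{P\}}=A$ gives $V_P=V_I$, whence $P=\bigcap V_P=\bigcap V_I=I$.

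To produce a basis of quasicompact opens, I will show that $\II(X)$ is an algebraic lattice whose compact elements are the finitely generated ideals. The key point is that a directed union of ideals is again an ideal, as each of the closure conditions \eqref{5}, \eqref{6}, \eqref{7} involves only finitely many elements; consequently every ideal is the directed join of its finitely generated subideals and each finitely generated ideal is compact. Transporting this across the isomorphism, the identities $U_I\subs U_J\Equ I\subs J$ and $\bigcup_\alpha U_{I_\alpha}=U_{\bigvee_\alpha I_\alpha}$ show that $U_I$ is quasicompact exactly when $I$ is compact, i.e.\ finitely generated. In particular every principal $U_{\langle x\rangle}$ is quasicompact, and since $\langle x\rangle\subs P\Equ x\in P$ we obtain $U_J=\bigcup_{x\in J}U_{\langle x\rangle}$ for each ideal $J$. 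Hence the $U_{\langle x\rangle}$ already form a basis of quasicompact opens, so $\DD(\mbox{Spec}\,X)$ is a basis.

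I expect the sobriety step to be the main obstacle, specifically the translation of topological irreducibility into the algebraic primeness condition \eqref{15}. This translation rests on two inputs: spatiality of $\II(X)$ from the corollary to Theorem~\ref{t3}, which yields the identity $\bigcap V_I=I$ and with it the dictionary $V_I\subs V_L\Equ L\subs I$, and the primeness of the points $P$, which is exactly what turns a union $V_J\cup V_K$ of closed sets into the single closed set $V_{J\cap K}$. Once these are in place, $T_0$, sobriety, and the quasicompact basis all follow by routine checking.
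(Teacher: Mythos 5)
Your proposal is correct and follows essentially the same route as the paper's proof: both establish $T_0$ directly from $U_P$, both translate irreducibility of a closed set through the lattice isomorphism $I\mapsto U_I$ into the condition that $I\cap J\subs P$ forces $I\subs P$ or $J\subs P$, which by \eqref{15} in the Brouwerian semilattice $\II(X)$ makes the defining ideal prime, and both obtain the basis from the equivalence ``$U_I$ quasicompact $\Leftrightarrow$ $I$ finitely generated'' together with principal ideals. Your version merely spells out details the paper leaves implicit (the directed-union/algebraicity argument behind the quasicompactness criterion, and uniqueness of the generic point, which also follows immediately from $T_0$).
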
 

\begin{proof} For distinct points $P,Q$ with $Q\nsubs P$, we have $P\in U_Q$ and $Q\nsubs U_Q$. Hence
$\mbox{Spec}\,X$ is a $T_0$-space. An open set $U_I$ is quasicompact if and only if the
corresponding ideal $I$ is finitely generated. Thus $\DD(\mbox{Spec}\,X)$ is a basis of
$\mbox{Spec}\,X$. To verify that $\mbox{Spec}\,X$ is sober, let $A$ be a non-empty closed
irreducible set in $\mbox{Spec}\,X$. So
there is an ideal $P$ with $A=\mbox{Spec}\,X\setm U_P$. The irreducibility of $A$ says that
$A\subs B\cup C$ with closed sets $B$ and $C$ implies that $A\subs B$ or $A\subs C$. Thus,
if $I,J$ are ideals, then $U_I\cap U_J\subs U_P$ implies that $U_I\subs U_P$ or $U_J\subs
U_P$. Equivalently, $I\cap J\subs P$ implies that $I\subs P$ or $J\subs P$. Since $\II(X)$
is a Brouwerian semilattice, \eqref{15} shows that $P$ is a prime ideal. Thus $A$ is the
closure of $\{P\}$. \end{proof}

\begin{rem}
A topological space with the conditions of Proposition~\ref{p3} and
the additional property that the intersection of quasi-compact open sets is quasi-compact
is said to be a {\em generalized spectral space} \cite{Sto, CGL}.
Hochster \cite{Ho} has shown that these spaces coincide with the topological spaces
underlying a separated scheme. The following example shows that the spectrum of an $L$-algebra
need not be of that type.
\end{rem}

Let $X$ and $Y$ be $L$-algebras. We define the {\em ordered sum}
$X\olessthan Y$ to be the set $(X\setm\{1\})\sqcup Y$ with the induced $L$-algebra operations
of $X$ and $Y$ together with $x\cdot y=1$ and $y\cdot x=x$ for $x\in X\setm\{1\}$ and $y\in
Y$. It is easily checked that $X\olessthan Y$ is an $L$-algebra with $x<y$ for $x\in X
\setm\{1\}$ and $y\in Y$. 

\begin{exa}
\label{exa:5}
Let $Y$ be any $L$-algebra that is not finitely generated and let $X:=\{1,p,q\}$ where $p$ and $q$ 
are incomparable
prime elements. The ideals $P:=Y\cup\{p\}$ and $Q:=Y\cup\{q\}$ of $X\olessthan Y$ are
finitely generated, but $P\cap Q=Y$ is not finitely generated. Thus
$\DD(\mbox{Spec}\,X\olessthan Y)$ is not closed with respect to intersection.
\end{exa}

For an $L$-algebra $X$, the following example shows that the finitely generated ideals need not 
be an $L$-subalgebra of $\II(X)$.

Let $\Omega$ be a partially ordered set with a greatest element 1.
Define $x\cdot y:=1$ if $x\le y$ and $x\cdot y:=y$ otherwise. This makes $\Omega$ into an
$L$-algebra (see \cite{Log}, Example~\ref{exa:1}). The ideals of $\Omega$ are the upper sets.

\begin{exa}
\label{exa:6}
Let $\Omega:=\{y,1,x,x_2,x_3,\ldots\}$ with $1>x>x_2>x_3>\cdots$ and $y<x$.
The ideals $I:=\{y,x,1\}$ and $J:=\{x,1\}$ are finitely generated, but $I\cdot J=
\{1,x,x_2,x_3,\ldots\}$ is not finitely generated. 
\end{exa}

The relationship between prime elements and prime ideals of an $L$-algebra depends on
the following 

\begin{Definition}
Let $X$ be an $L$-algebra. We call an element $q<1$ in $X$ {\em quasi-prime} if
the implication $q\in I\vee J\Ra q\in I\cup J$ holds for any pair of ideals $I,J\in\II(X)$.
The set of quasi-prime elements of $X$ will be denoted by $QP(X)$.
\end{Definition} 

For any $x\in X$, Zorn's lemma shows the existence of ideals $P$ which are maximal among the
ideals $I$ with $x\notin I$. Such an ideal $P$ must be prime (see the proof of
\cite{TopL}, Proposition~10). If $x$ is quasi-prime, the ideal $P_x:=P$ is unique. So
$x\mapsto P_x$ gives a natural map
\begin{equation}
\label{16}
QP(X)\lra\mbox{Spec}\,X.  
\end{equation} 

\begin{prop}
    Every prime element of an $L$-algebra $X$ is quasi-prime.     
\end{prop}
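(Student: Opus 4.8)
The plan is to reduce everything to the explicit description of the join furnished by Proposition~\ref{p2}. Let $p$ be a prime element of $X$ and suppose $p\in I\vee J$ for ideals $I,J$. First I would invoke Proposition~\ref{p2} to produce an element $x\in I$ with $x\equiv p$ (mod $J$); in particular this gives $x\cdot p\in J$. Next I would feed the element $x$ into the defining dichotomy of a prime element: either $x\le p$ or $x\cdot p\le p$. I expect each of these two alternatives to push $p$ into one of the two ideals, which is precisely the quasi-prime conclusion $p\in I\cup J$.

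In the case $x\le p$ we have $x\cdot p=1$ by \eqref{4}. Since $x\in I$, ideal axiom~\eqref{6} with $y:=p$ gives $(x\cdot p)\cdot p\in I$; applying the logical unit identity $1\cdot p=p$ from \eqref{1}, this reads $p\in I$. In the case $x\cdot p\le p$ we instead have $(x\cdot p)\cdot p=1$. Here I would apply axiom~\eqref{6} to the element $x\cdot p$ of the ideal $J$ (again with $y:=p$), obtaining $\bigl((x\cdot p)\cdot p\bigr)\cdot p\in J$, which collapses to $1\cdot p=p$ and hence yields $p\in J$. In either case $p\in I\cup J$.

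I do not anticipate a real obstacle: the entire argument is a single application of Proposition~\ref{p2} followed by the case split in the definition of a prime element, with both resulting expressions simplifying through $1\cdot p=p$. The only point requiring care is the bookkeeping of which ideal receives the conclusion in each branch. It is worth noting in passing that only the one-sided membership $x\cdot p\in J$ is used, so the symmetry of the join recorded after Proposition~\ref{p2} is not needed, and the requirement $p<1$ for a quasi-prime element is automatic since $p$ is assumed prime.
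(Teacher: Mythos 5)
Your proposal is correct and takes essentially the same route as the paper: invoke Proposition~\ref{p2} to get $x\in I$ with $x\equiv p$ (mod $J$), then apply the prime dichotomy to $x$, landing $p\in I$ in the first case and $p\in J$ in the second. The only cosmetic difference is that you close each branch with axiom~\eqref{6} and the identity $1\cdot p=p$, where the paper's terse deductions go most directly through~\eqref{5} (e.g.\ from $x\cdot p\in J$ and $(x\cdot p)\cdot p=1\in J$ conclude $p\in J$); both are valid, and your side remarks (only $x\cdot p\in J$ is needed, and $p<1$ is automatic) are accurate.
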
 

\begin{proof} 
Let $p\in X$ be prime, and let $I,J\in\II(X)$ be ideals with $p\in I\vee J$. By
Proposition~\ref{p2}, there is an element $x\in I$ with $x\equiv p$ (mod $J$). If $x\le p$,
this implies that $p\in I$. Otherwise, $x\cdot p\le p$, which yields $p\in J$. 
\end{proof}

Thus every prime element $p\in P(X)$ determines a prime ideal
$P_p$. The map (\ref{16}) need not be injective:

\begin{exa}
Let $X=\{1,p,q,0\}$ be the $L$-algebra with underlying partial
order
\[
\begin{tikzcd}
	& 1 \\
	p && q \\
	& 0
	\arrow[no head, from=1-2, to=2-1]
	\arrow[no head, from=2-1, to=3-2]
	\arrow[no head, from=3-2, to=2-3]
	\arrow[no head, from=2-3, to=1-2]
\end{tikzcd}
\]
and prime elements $p$ and $q$ satisfying $p\cdot q=p\cdot 0=q\cdot p=q\cdot 0=0$. Hence 
$\langle p\rangle=\langle q\rangle=X$, and thus $P_p=P_q=\{1\}$.
\end{exa}

\begin{rem}
Quasi-prime elements need not be prime. Recall that an $L$-algebra
$X$ is said to be {\em sharp} \cite{qset} if it satisfies the equation
$$x\cdot(x\cdot y)=x\cdot y;$$
see \cite{GG, Gud} for its quantum mechanical meaning.
If all elements $x<1$ 
are maximal, the $L$-algebra is said to be {\em discrete} \cite{Rum}. By \cite{Rum},
Proposition~18, there is a one-to-one correspondence between discrete $L$-algebras and a
class of geometric lattices \cite{Grae}. The proof of \cite{TopL}, Theorem~3, shows that
each element $x<1$ of a sharp discrete $L$-algebra $X$ is quasi-prime. On the other hand, an
element $p<1$ of $X$ is prime if and only if $q\cdot p=p$ for all $q\ne p$. So there need
not exist any prime element in $X$.
\end{rem}

\section{The spectrum of special $L$-algebras} 

A product in the category of $L$-algebras is given by the cartesian product $\prod X_i$ with
component-wise operation. There are natural embeddings 
\[
X_j\hra\prod X_i 
\]
which map $x\in X_j$ 
to the element with $j$-th component $x$ and all other components 1. Since $X_j$ is the
kernel of the canonical map $\prod X_i\tra\prod_{i\ne j}X_i$, the $X_j$ are ideals of $\prod X_i$. For 
$x\in X_j$ and $y\in X_k$ with distinct $j,k$, we have $x\cdot y=y$ in $\prod X_i$.

\begin{prop}
Let $X$ and $Y$ be $L$-algebras. Then $S(X\times Y)=S(X)\times S(Y)$. In particular, each
element of $X\times Y$ has a unique representation 
\[
xy=yx=x\wedge y
\]
with $x\in X$ and $y\in Y$.
\end{prop}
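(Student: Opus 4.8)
The plan is to apply Theorem~\ref{t1} to the $L$-subalgebra $X\times Y$ of the self-similar $L$-algebra $S(X)\times S(Y)$. Concretely, I would carry out three steps: first check that $S(X)\times S(Y)$ is self-similar, then that $X\times Y$ sits inside it as an $L$-subalgebra, and finally that $X\times Y$ generates $S(X)\times S(Y)$ as a monoid. Theorem~\ref{t1} then yields the identification $S(X\times Y)\cong S(X)\times S(Y)$.

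For the first step, recall that self-similar $L$-algebras form a variety, defined by Eqs.~\eqref{9}--\eqref{11}, so the cartesian product $S(X)\times S(Y)$ with component-wise operations is again self-similar. Moreover, by \eqref{12} the monoid operation of a self-similar $L$-algebra is determined by its $L$-algebra structure, whence the induced monoid product on $S(X)\times S(Y)$ is component-wise: $(x_1,x_2)(y_1,y_2)=(x_1y_1,x_2y_2)$. The second step is immediate, since $X\hra S(X)$ and $Y\hra S(Y)$ are $L$-subalgebras and the operation of $S(X)\times S(Y)$ is component-wise, so $X\times Y$ is closed under it.

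The key step is the third. Here I would use that $X$ generates $S(X)$ and $Y$ generates $S(Y)$ as monoids, which is Theorem~\ref{t1} applied to $S(X)$ and $S(Y)$ themselves. Writing an arbitrary $a\in S(X)$ as a monoid product $a=x_1\cdots x_m$ with $x_i\in X$, and $b\in S(Y)$ as $b=y_1\cdots y_n$ with $y_j\in Y$, component-wise multiplication gives
\[
(a,b)=(x_1,1)\cdots(x_m,1)\,(1,y_1)\cdots(1,y_n),
\]
a product of elements of $X\times Y$. Hence $X\times Y$ generates $S(X)\times S(Y)$ and Theorem~\ref{t1} applies. I do not anticipate a genuine obstacle, but the point demanding care is the confirmation that the monoid structure transported to the product is the component-wise one; this is exactly what \eqref{12} (equivalently, the component-wise behaviour of the maps $\sigma_y$) guarantees.

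For the final assertion I would identify $x\in X$ with $(x,1)$ and $y\in Y$ with $(1,y)$. Then in $S(X\times Y)=S(X)\times S(Y)$ the component-wise product yields $xy=(x,1)(1,y)=(x,y)$ and $yx=(1,y)(x,1)=(x,y)$. Since $x\le 1$ in $X$ and $y\le 1$ in $Y$, the component-wise meet gives $x\wedge y=(x\wedge 1,1\wedge y)=(x,y)$, so that $xy=yx=x\wedge y=(x,y)$. Uniqueness is clear, as the first component of $(x,y)$ determines $x$ and the second determines $y$.
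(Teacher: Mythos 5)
Your proposal is correct and takes essentially the same route as the paper: both deduce $S(X\times Y)=S(X)\times S(Y)$ from Theorem~\ref{t1}, using that the product of self-similar $L$-algebras is self-similar with component-wise operations and that $X\times Y$ generates the monoid $S(X)\times S(Y)$, and both then read off $xy=yx=x\wedge y$ from the component-wise structure (the paper via the explicit computation $(x,y)=(x,1)(1,y)=(1\cdot x,y\cdot 1)(1,y)=(x,1)\wedge(1,y)$ using Eq.~\eqref{13}). Your write-up merely makes the generation step and the component-wise meet more explicit than the paper's one-line treatment.
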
 

\begin{proof} 
Since $S(X)\times S(Y)$ is self-similar, and $X\times Y$ generates the monoid
$S(X)\times S(Y)$, Theorem~\ref{t1} implies that $S(X\times Y)=S(X)\times S(Y)$. For $(x,y)
\in X\times Y$, 
\[
(x,y)=(x,1)(1,y)=(1\cdot x,y\cdot 1)(1,y)=(x,1)
\wedge(1,y)
\]
by Eqs.~\eqref{13}.
\end{proof} 

As in 
universal algebra, we define

\begin{Definition}
We say that an $L$-algebra $X$ is a {\em subdirect product} of $L$-algebras $Y$ and $Z$
if $X$ is an $L$-subalgebra of $Y\times Z$ such that the projections of $Y\times Z$ map $X$
onto the factors $Y$ and $Z$. If $X$ does not admit a subdirect product with non-invertible
projections $Y\twoheadleftarrow X\tra Z$, we call $X$ {\em subdirectly irreducible}.
\end{Definition} 

\begin{prop}
\label{p6}
An ideal $P$ of an $L$-algebra $X$ is prime if and only if $X/P$ is subdirectly irreducible.
\end{prop}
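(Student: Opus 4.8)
The plan is to translate both properties into conditions on the ideal lattice $\II(X)$ and then let the distributivity of $\II(X)$ (Theorem~\ref{t3}) do the decisive work. First I would invoke the ideal--congruence correspondence \eqref{8}: the ideals of $X/P$ are precisely the images $A/P$ with $A\in\II(X)$ and $A\supseteq P$, so $\II(X/P)$ is isomorphic to the interval $[P,X]=\{A\in\II(X)\:|\:A\supseteq P\}$, with bottom element $P$ itself. I would also record the elementary observation, immediate from \eqref{8}, that congruence modulo $A\cap B$ is the conjunction of congruence modulo $A$ and modulo $B$; in particular, congruence modulo $\{1\}$ is equality by \eqref{3}.

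Next I would give the lattice-theoretic reading of subdirect irreducibility. A two-factor subdirect decomposition $Y\twoheadleftarrow X/P\tra Z$ is the same datum as a pair of ideals $\bar A,\bar B$ of $X/P$ with $\bar A\cap\bar B=\{1\}$, the factors being $Y=(X/P)/\bar A$ and $Z=(X/P)/\bar B$: the trivial intersection is exactly the injectivity of $X/P\hra Y\times Z$ (by the preceding remark), surjectivity of the two projections is automatic, and a projection is invertible precisely when its kernel ideal is trivial. Hence $X/P$ is subdirectly irreducible if and only if the bottom element $P$ is meet-irreducible in $\II(X/P)\cong[P,X]$, i.e.\ for $A,B\in\II(X)$ with $A,B\supseteq P$, the equality $A\cap B=P$ forces $A=P$ or $B=P$.

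On the other side, since $\II(X)$ is a Brouwerian semilattice, the definition of a prime ideal unwinds---exactly as in the proof of Proposition~\ref{p3} via \eqref{15}---to the meet-prime condition: $P$ is prime if and only if $I\cap J\subseteq P$ implies $I\subseteq P$ or $J\subseteq P$, for all $I,J\in\II(X)$. It then remains to match meet-primeness with meet-irreducibility in the interval above $P$. The implication from prime to subdirectly irreducible is immediate: if $A,B\supseteq P$ and $A\cap B=P\subseteq P$, then meet-primeness gives $A\subseteq P$ or $B\subseteq P$, whence $A=P$ or $B=P$. For the converse I would pass to the join with $P$: given $I\cap J\subseteq P$, set $A:=I\vee P$ and $B:=J\vee P$, so $A,B\supseteq P$, and compute $A\cap B=(I\cap J)\vee P=P$ using the distributive identity $(I\vee P)\cap(J\vee P)=(I\cap J)\vee P$ (Theorem~\ref{t3}). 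Meet-irreducibility then yields $A=P$ or $B=P$, i.e.\ $I\subseteq P$ or $J\subseteq P$, so $P$ is prime.

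I expect the main obstacle to be the bookkeeping in the second paragraph: carefully verifying that two-factor subdirect decompositions of $X/P$ correspond bijectively to pairs of ideals with trivial intersection, and that ``non-invertible projection'' translates into ``non-trivial kernel ideal''. Once subdirect irreducibility has been recast as meet-irreducibility of the bottom element of $\II(X/P)$, the equivalence with meet-primeness is purely lattice-theoretic, and the only substantive input is the distributive identity $(I\vee P)\cap(J\vee P)=(I\cap J)\vee P$, which is precisely where Theorem~\ref{t3} enters.
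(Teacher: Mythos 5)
Your proposal is correct and takes essentially the same route as the paper: both directions reduce primeness to the meet-prime condition \eqref{15} in the Brouwerian semilattice $\II(X)$, and your converse hinges on exactly the paper's distributivity computation $(I\vee P)\cap(J\vee P)=(I\cap J)\vee P$ from Theorem~\ref{t3}. Your explicit bijection between two-factor subdirect decompositions of $X/P$ and pairs of ideals above $P$ meeting in $P$ just spells out the kernel bookkeeping that the paper leaves implicit in writing $P=\mbox{Ker}(pf)\cap\mbox{Ker}(qf)$ and in declaring the subdirect product $X/P\hra X/I\times X/J$ trivial.
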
 

\begin{proof} Let $P$ be prime, and let $X/P\hra Y\times Z$ be a subdirect product. So there is a
morphism $f\colon X\ra Y\times Z$ with kernel $P$. Let $p\colon Y\times Z\tra Y$ and
$q\colon Y\times Z\tra Z$ be the canonical projections. Then 
\[
P=\mbox{Ker}(pf)\cap
\mbox{Ker}(qf).
\]
Since $P$ is prime, $\mbox{Ker}(pf)=P$ or $\mbox{Ker}(qf)=P$. Thus $X/P$
is subdirectly irreducible. 

Conversely, let $X/P$ be subdirectly irreducible. For ideals
$I,J\in\II(X)$ with $I\cap J=P$ this implies that the subdirect product $X/P\hra X/I\times
X/J$ is trivial, that is, $I=P$ or $J=P$. More generally, let $I,J$ be ideals with $I\cap
J\subs P$. Then $(I\vee P)\cap(J\vee P)=(I\cap J)\vee P=P$. Hence $I\vee P=P$ or $J\vee P
=P$. By \eqref{15}, this proves that $P$ is a prime ideal. 
\end{proof}  

\begin{prop}
Let $X$ and $Y$ be $L$-algebras. Then 
\[
\II(X\times Y)=\II(X)\times\II(Y)\text{ and }\mbox{Spec}(X\times Y)=\mbox{Spec}\,X\sqcup\mbox{Spec}\,Y.
\]
\end{prop}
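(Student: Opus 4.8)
The plan is to prove the two claimed identities separately, exploiting the fact that $X$ and $Y$ sit inside $X\times Y$ as complementary ideals whose intersection is trivial. First I would establish the lattice isomorphism $\II(X\times Y)\cong\II(X)\times\II(Y)$. For any ideal $K$ of $X\times Y$, the natural candidates are $K_X:=K\cap X$ and $K_Y:=K\cap Y$, where $X$ and $Y$ are identified with the ideals $X\times\{1\}$ and $\{1\}\times Y$ described in the preceding paragraph. The map $K\mapsto(K_X,K_Y)$ should be the desired bijection, with inverse $(I,J)\mapsto I\times J$. The key step is to show that every ideal $K$ of $X\times Y$ satisfies $K=K_X\times K_Y$, equivalently $K=(K\cap X)\vee(K\cap Y)$. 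This is where the structural fact $(x,y)=(x,1)\wedge(1,y)=(x,1)(1,y)$ from the previous proposition does the work: given $(x,y)\in K$, I would use the ideal axioms \eqref{7} together with the product decomposition to recover $(x,1)$ and $(1,y)$ as elements of $K$, placing them in $K_X$ and $K_Y$ respectively, and then use Proposition~\ref{p2} (the description of the join) to reassemble $(x,y)$ inside $(K\cap X)\vee(K\cap Y)$. Verifying that $K\mapsto(K_X,K_Y)$ and its inverse are mutually inverse and order-preserving is then routine.

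For the spectrum identity, I would pass through the characterization of primeness in $\II(X\times Y)$. Using the already-proven lattice isomorphism $\II(X\times Y)\cong\II(X)\times\II(Y)$ and the fact (from Proposition~\ref{p3} and its surrounding discussion) that prime ideals correspond exactly to the prime elements of the Brouwerian semilattice of ideals via criterion \eqref{15}, the question becomes: which elements of the product lattice $\II(X)\times\II(Y)$ are prime? In a product of bounded lattices, an element $(I,J)$ is meet-prime precisely when one coordinate is the top element and the other is meet-prime in its own factor. Concretely, a prime ideal of $X\times Y$ must be of the form $P\times Y$ with $P$ prime in $X$, or $X\times Q$ with $Q$ prime in $Y$. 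I would verify this directly from \eqref{15}: if $(I_1,J_1)\cap(I_2,J_2)\subs(P_1,P_2)$ forces one factor into $(P_1,P_2)$, then testing against elements supported in a single coordinate shows that one of $P_1,P_2$ must be the full algebra while the other is genuinely prime.

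Having classified the primes, the disjoint union statement follows by bookkeeping. The map sending a prime $P$ of $X$ to $P\times Y$, and a prime $Q$ of $Y$ to $X\times Q$, gives a bijection $\mbox{Spec}\,X\sqcup\mbox{Spec}\,Y\to\mbox{Spec}(X\times Y)$. To upgrade this to a homeomorphism I would check that it carries basic open sets to basic open sets: the open set $U_{(I,J)}=\{R\in\mbox{Spec}(X\times Y)\:|\:(I,J)\nsubs R\}$ decomposes, under the classification of primes, into the part lying over $U_I\subs\mbox{Spec}\,X$ together with the part over $U_J\subs\mbox{Spec}\,Y$, which is exactly the disjoint-union topology.

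I expect the main obstacle to be the first step, namely proving $K=(K\cap X)\vee(K\cap Y)$ for an arbitrary ideal $K$, since the inclusion $(K\cap X)\vee(K\cap Y)\subs K$ is immediate but the reverse inclusion requires genuinely decomposing a general element $(x,y)\in K$ into its two coordinate pieces \emph{within} $K$. The ideal axioms only guarantee closure under the specified operations, so recovering $(x,1)$ and $(1,y)$ individually demands a careful argument: one must show, for instance, that $(x,1)=(1,y)\cdot(x,y)$ or an analogous expression produced by \eqref{6}--\eqref{7} lands back in $K$, using that $(1,y)$ and $(x,y)$ interact via the product structure of $S(X)\times S(Y)$. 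Once the coordinate pieces are shown to lie in $K$, the rest is formal, but isolating them correctly is the crux of the whole proposition.
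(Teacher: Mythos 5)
Your proposal is correct, but it takes a genuinely different route from the paper at both steps. For the ideal lattice, the paper argues top-down: since $X\vee Y=X\times Y$ in $\II(X\times Y)$, Theorem~\ref{t3} (distributivity) gives $I=I\cap(X\vee Y)=(I\cap X)\vee(I\cap Y)$, and Proposition~\ref{p2} identifies this join with $(I\cap X)\times(I\cap Y)$. Your bottom-up decomposition works as well, and the step you flag as the main obstacle is in fact immediate: the formula you guess, $(x,1)=(1,y)\cdot(x,y)$, lies in $K$ directly by axiom \eqref{7} (applied with $(x,y)\in K$ and the arbitrary element $(1,y)$), and symmetrically $(1,y)=(x,1)\cdot(x,y)\in K$; conversely, $x\in K\cap X$ and $y\in K\cap Y$ give $(x,y)\in K$ by \eqref{5}, since $(x,1)\in K$ and $(x,1)\cdot(x,y)=(1,y)\in K$. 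So you do not even need Proposition~\ref{p2} to reassemble, and your argument is arguably more elementary than the paper's, avoiding Theorem~\ref{t3} entirely --- at the cost of obscuring that the proposition is naturally a corollary of distributivity. For the classification of primes, the paper invokes Proposition~\ref{p6}: $(X\times Y)/P\cong X/(P\cap X)\times Y/(P\cap Y)$ is a subdirect product, so subdirect irreducibility forces $P\cap X=X$ or $P\cap Y=Y$. Your lattice-theoretic route via \eqref{15} is equivalent and equally valid: testing $(P_1\times Y)\cap(X\times P_2)\subseteq P_1\times P_2$ shows a meet-prime element of the product lattice must have one coordinate equal to the top and the other meet-prime in its factor. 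Finally, your explicit check that the bijection carries basic open sets $U_{(I,J)}$ to the disjoint union of the parts over $U_I$ and $U_J$ is more careful than the paper, which states only the set-level correspondence and leaves the homeomorphism implicit.
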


\begin{proof} 
For $I\in\II(X\times Y)$, Theorem~\ref{t3} gives 
\[
I=(I\cap X)\vee(I\cap Y)=
(I\cap X)\times(I\cap Y).
\]
Since $I\cap X$ and $I\cap Y$ are ideals of $X$ and $Y$, respectively,
this proves the first statement. For a prime ideal $P$ of $X\times Y$, Proposition~\ref{p6}
implies that $P\cap X=X$ or $P\cap Y=Y$. So we can assume without loss of generality that
$X\subs P$. Thus $P=X\times J$ with a prime ideal $J$ of $Y$. Conversely, every ideal
$X\times J$ with $J\in\mbox{Spec}\,Y$ is a prime ideal of $X\times Y$. 
\end{proof}

The next result shows that, as in commutative ring theory, under some
assumptions, the open and the closed subsets in the spectrum of an $L$-algebra
are again spectra of $L$-algebras.

\begin{thm}
\label{t4}
Let $X$ be an $L$-algebra such that $x\cdot (y\cdot z)=y\cdot (x\cdot z)$ for
all $x,y,z\in X$, and let $I$ be an ideal of $X$. Then $P\mapsto I\cdot P$ and
$Q\cap I\mapsfrom Q$ give a bijective correspondence between the prime ideals
$P$ of $I$ and the prime ideals $Q$ of $X$ in the open set $U_I$. 
\end{thm}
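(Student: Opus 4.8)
The plan is to realize the two assignments of the theorem as an adjunction between the ideal lattices and then to reduce the bijectivity to a single statement about subdirectly irreducible quotients. First I would record that $I$ is genuinely an $L$-subalgebra of $X$: by \eqref{7}, $y\in I$ forces $x\cdot y\in I$ for every $x\in X$, so $I$ is closed under the operation and $\II(I)$, $\mbox{Spec}\,I$ are defined. For a subset $P\subs I$ I read the formula from the corollary of Theorem~\ref{t3} literally, setting $I\cdot P:=\{x\in X\:|\:\langle x\rangle\cap I\subs P\}$; since $\langle x\rangle\cap I$ is an ideal of $X$ contained in $I$ for every $x$, this makes sense even when $P$ is only an ideal of $I$. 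The organising tool is the adjunction
\[
Q\subs I\cdot P\;\Equ\; Q\cap I\subs P\qquad(Q\in\II(X)),
\]
exhibiting $I\cdot P$ as the largest ideal of $X$ meeting $I$ inside $P$; in particular $(I\cdot P)\cap I\subs P$, and $Q\mapsto Q\cap I$ is left adjoint to $P\mapsto I\cdot P$.

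Two of the verifications then follow from distributivity (Theorem~\ref{t3}) and the prime criterion \eqref{15} for the Brouwerian semilattices $\II(X)$ and $\II(I)$. If $P$ is prime in $I$ and $A\cap B\subs I\cdot P$ for $A,B\in\II(X)$, then $(A\cap I)\cap(B\cap I)\subs(I\cdot P)\cap I\subs P$, so $A\cap I\subs P$ or $B\cap I\subs P$, whence $A\subs I\cdot P$ or $B\subs I\cdot P$ by the adjunction; thus $\phi(P):=I\cdot P$ is prime, and $I\nsubs I\cdot P$, since $I\subs I\cdot P$ would force $I\subs P$, impossible for the proper ideal $P$. Conversely, for $Q$ prime with $I\nsubs Q$ I compute $\phi(\psi(Q))=I\cdot(Q\cap I)$: here $Q\subs I\cdot(Q\cap I)$ is clear, and if $x\in I\cdot(Q\cap I)$ then $\langle x\rangle\cap I\subs Q$, so primality of $Q$ together with $I\nsubs Q$ and \eqref{15} give $\langle x\rangle\subs Q$, i.e. $x\in Q$. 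Hence $\phi\circ\psi=\mathrm{id}$, so $\psi$ is injective and $\phi$ surjective onto $U_I$.

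The remaining, and genuinely hard, point is $\psi\circ\phi=\mathrm{id}$, equivalently that every prime ideal $P$ of $I$ is \emph{saturated}, $(I\cdot P)\cap I=P$; the inclusion $\subs$ is automatic, and the reverse inclusion is exactly where the non-prime case breaks down, because an ideal of $I$ need not be an ideal of $X$. I would attack this through subdirect irreducibility (Proposition~\ref{p6}). Writing $Q:=I\cdot P$ and $P':=Q\cap I\subs P$, the embedding $I/P'\hra X/Q$ identifies $I/P'$ with a nontrivial ideal $J$ of the subdirectly irreducible algebra $X/Q$, and the maximality of $Q=I\cdot P$ translates into the statement that no nontrivial ideal of $X/Q$ meets $J$ inside the image $\pi(P)=P/P'$; applied to the monolith $\mu\subs J$ this yields $\mu\nsubseteq\pi(P)$. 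The crux is therefore the lemma that \emph{a nontrivial ideal $J$ of a subdirectly irreducible $L$-algebra is again subdirectly irreducible, with the same monolith $\mu$}: granting it, any nontrivial ideal of $J$ contains $\mu$, so $\pi(P)\ne\{1\}$ would give $\mu\subs\pi(P)$, contradicting $\mu\nsubseteq\pi(P)$; hence $\pi(P)=\{1\}$ and $P=P'$. The same lemma shows that $\psi(Q)=Q\cap I$ is prime, since $I/(Q\cap I)\cong J$ is then subdirectly irreducible. I expect the proof of this lemma — controlling the ideals of an ideal of $I$, which need not be ideals of $X$ — to be the main obstacle, and I would try to establish it by transporting the monolith congruence of $X/Q$ along the partial multiplication into the self-similar closure (Theorem~\ref{t2}), so that it restricts to the monolith of $J$.
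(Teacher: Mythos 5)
Your first two paragraphs are essentially sound and in fact reproduce half of the paper's argument: the adjunction $Q\subs I\cdot P\Eq Q\cap I\subs P$, the primality of $I\cdot P$ via \eqref{15}, the exclusion $I\nsubs I\cdot P$, and the computation $I\cdot(Q\cap I)=Q$ for prime $Q\in U_I$ all match. (One unchecked point even there: that your set $I\cdot P=\{x\in X\:|\:\langle x\rangle\cap I\subs P\}$ is an ideal of $X$ when $P$ is merely an ideal of $I$. Conditions \eqref{6} and \eqref{7} are easy since $\langle(x\cdot y)\cdot y\rangle\subs\langle x\rangle$, but \eqref{5} needs $\langle y\rangle\cap I\subs(\langle x\rangle\cap I)\vee(\langle x\cdot y\rangle\cap I)\subs P$, and the last inclusion requires knowing that a join in $\II(X)$ of ideals of $X$ contained in $P$ stays in $P$ --- not automatic when $P\in\II(I)$ only, though it can be salvaged via Proposition~\ref{p2}.) The genuine gap is your final step. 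The lemma you lean on --- a nontrivial ideal of a subdirectly irreducible $L$-algebra is subdirectly irreducible with the same monolith --- is not proved, you yourself flag it as the main obstacle, and worse, it cannot be applied as stated: Proposition~\ref{p6} uses only \emph{binary} subdirect products, so a prime $Q$ makes $X/Q$ finitely subdirectly irreducible, which does not guarantee a monolith. Concretely, take the poset $L$-algebra on the chain $[0,1]$ (so $x\cdot y=1$ for $x\le y$, else $x\cdot y=y$); the ideals are the upper sets, which form a chain, so $Q=\{1\}$ is prime, yet there is no smallest nontrivial ideal of $X/Q=X$. Hence ``applied to the monolith $\mu$'' has no referent, and both $\psi\circ\phi=\mathrm{id}$ and the primality of $\psi(Q)=Q\cap I$ remain unproved in your outline.

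The ingredient you talked yourself out of is exactly the paper's opening move: every ideal $P$ of $I$ \emph{is} an ideal of $X$. You wrote that the difficulty arises ``because an ideal of $I$ need not be an ideal of $X$,'' but the paper asserts the contrary and verifies \eqref{5} directly (if $x\in P$ and $x\cdot y\in P$, then $y\in I$ since $I\in\II(X)$, whence $y\in P$ since $P\in\II(I)$). Granting this, your ``genuinely hard point'' dissolves into your own adjunction: $P\in\II(X)$ satisfies $P\cap I=P\subs P$, so $P\subs I\cdot P$ and thus $(I\cdot P)\cap I=P$ --- and this saturation holds for \emph{all} $P\in\II(I)$, not just primes, contrary to your expectation that the non-prime case breaks down. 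Likewise, for prime $Q\in U_I$ and $A,B\in\II(I)$ with $A\cap B\subs Q\cap I$, the fact that $A,B\in\II(X)$ lets you apply \eqref{15} for $Q$ in $\II(X)$ directly, giving $A\subs Q\cap I$ or $B\subs Q\cap I$, with no structure theory of subdirectly irreducible algebras needed. So the missing idea is not a monolith lemma but the elementary transitivity statement $\II(I)\subs\II(X)$; without it (or a proof of your lemma under a hypothesis actually available), your argument establishes only that every prime $Q\in U_I$ has the form $I\cdot(Q\cap I)$, not the claimed bijection.
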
 

\begin{proof} 
Let $P\in\II(I)$ be given. To show that $P\in\II(X)$, let $x,y\in X$ be elements with 
$x\in P$ and $x\cdot y\in P$. Since $I$ is an ideal, this implies that $y\in I$. Hence
$y\in P$. Since $\II(X)$ is a Brouwerian semilattice, it follows that $P\in\II(X)$. Thus
$I\cdot P\in\II(X)$, and $(I\cdot P)\cap I=P$. Now assume that $P$ is prime, and let $J,K$
be ideals of $X$ with $J\cap K\subs I\cdot P$. Then $J\cap K\cap I\subs P$, which yields
$J\cap I\subs P$ or $K\cap I\subs P$. Hence $J\subs I\cdot P$ or $K\subs I\cdot P$, and thus
$I\cdot P$ is a prime ideal of $X$. Moreover, $I\subs I\cdot P$ would imply that $I=I\cap I
\subs P$, a contradiction. Thus $I\cdot P\in U_I$.

Conversely, a similar argument shows that any prime ideal $Q\in U_I$ of $X$ 
intersects into a prime ideal of $I$. Since $I\nsubs Q$ and $Q$ is prime, $I\cdot Q\subs Q$.
Thus $I\cdot Q=Q$. 
\end{proof} 

As a consequence, under the assumptions of Theorem~\ref{t4}, 
one finds that every open set $U_I$ of $X$ and its 
complement $A_I:=\mbox{Spec}\,X\setm U_I$
are spectra of $L$-algebras with respect to the induced topology:
$$\mbox{Spec}\,I\cong U_I,\qquad\mbox{Spec}\,X/I\cong A_I.$$
Indeed, $U_I\cap U_J=U_{I\cap J}$ shows that $P\mapsto I\cdot P$ identifies $\mbox{Spec}\,I$
with the open subspace $U_I$ of $\mbox{Spec}\,X$. 
For any $L$-algebra $X$ there is monotone map
\begin{equation}\label{17}
i\colon X\op\lra\DD(\mbox{Spec}\,X)   
\end{equation}
with $i(x):=U_{\langle x\rangle}$. Let $C(X)$ be the {\em $\wedge$-closure} of $X$, that is,
the $L$-subalgebra of all finite meets $x_1\wedge\cdots\wedge x_n\in S(X)$ with $x_1,\ldots,
x_n\in X$. By Eqs.~(\ref{10}) and (\ref{13}), and \cite{Log}, Proposition~4, the equations
\begin{align}
(a\wedge b)\cdot c &= (a\cdot b)\:\cdot\:(a\cdot c)\label{18}\\
a\cdot(b\wedge c) &= (a\cdot b)\wedge(a\cdot c).\label{19}
\end{align}
hold in $C(X)$. 

\begin{prop}
\label{p8}
Let $X$ be a $\wedge$-closed $L$-algebra. Then the map $(\ref{17})$ is surjective.     
\end{prop}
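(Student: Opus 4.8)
The plan is to show that the surjectivity of $i$ amounts to the statement that every finitely generated ideal of a $\wedge$-closed $L$-algebra is principal. Recall from the proof of Proposition~\ref{p3} that an open set $U_I$ is quasi-compact precisely when $I$ is finitely generated, so every element of $\DD(\mbox{Spec}\,X)$ has the form $U_I$ with $I=\langle x_1,\ldots,x_n\rangle$. Since $i(x)=U_{\langle x\rangle}$ and $I\mapsto U_I$ is a bijection, it suffices to produce a single $x\in X$ with $\langle x\rangle=I$. Because $X$ is $\wedge$-closed, the meet $x:=x_1\wedge\cdots\wedge x_n$ lies in $X$, and the natural candidate identity to prove is
\[
\langle x_1\wedge\cdots\wedge x_n\rangle=\langle x_1,\ldots,x_n\rangle .
\]

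By induction this reduces to the case $n=2$, namely $\langle x\wedge y\rangle=\langle x,y\rangle$. First I would record that every ideal $I$ is an up-set: if $a\in I$ and $a\le b$, then $a\cdot b=1\in I$, so \eqref{5} gives $b\in I$. Consequently, since $x\wedge y\le x$ and $x\wedge y\le y$, any ideal containing $x\wedge y$ contains both $x$ and $y$; this yields the easy inclusion $\langle x,y\rangle\subs\langle x\wedge y\rangle$.

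The harder inclusion $\langle x\wedge y\rangle\subs\langle x,y\rangle$ is the crux, and here I would invoke the description of the join from Proposition~\ref{p2}: it is enough to check that $x\wedge y\in\langle x\rangle\vee\langle y\rangle$, i.e.\ that some element of $\langle x\rangle$ is congruent to $x\wedge y$ modulo $\langle y\rangle$. Taking $x\in\langle x\rangle$ itself, I must verify $x\equiv x\wedge y$ (mod $\langle y\rangle$), that is, $(x\wedge y)\cdot x\in\langle y\rangle$ and $x\cdot(x\wedge y)\in\langle y\rangle$. The first holds because $x\wedge y\le x$ forces $(x\wedge y)\cdot x=1$. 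For the second, the $\wedge$-closedness lets me use \eqref{19}: $x\cdot(x\wedge y)=(x\cdot x)\wedge(x\cdot y)=x\cdot y$, and since $y\in\langle y\rangle$, axiom \eqref{7} gives $x\cdot y\in\langle y\rangle$. This establishes the congruence, hence $x\wedge y\in\langle x,y\rangle$.

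Combining the two inclusions gives $\langle x\wedge y\rangle=\langle x,y\rangle$, and an induction on $n$ then yields the displayed identity. Setting $x=x_1\wedge\cdots\wedge x_n\in X$, we obtain $U_I=U_{\langle x\rangle}=i(x)$, so $i$ is surjective. I expect the main obstacle to be the second congruence condition: its verification is exactly where $\wedge$-closedness (through \eqref{19}) is indispensable, and it is worth double-checking that the equations \eqref{18}, \eqref{19} --- stated for $C(X)$ --- do apply to $X$ itself, which holds precisely because $X$ is $\wedge$-closed.
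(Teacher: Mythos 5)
Your proof is correct and takes essentially the same approach as the paper: both reduce surjectivity to showing every finitely generated ideal is principal via the identity $\langle x\wedge y\rangle=\langle x\rangle\vee\langle y\rangle$, with the same key computation $x\cdot(x\wedge y)=x\cdot y$ from \eqref{19} combined with \eqref{7} and the up-set property. The only cosmetic difference is that the paper proves the hard inclusion directly---any ideal containing $x$ and $y$ contains $x\wedge y$ by \eqref{5}---whereas you detour through the join description of Proposition~\ref{p2}, which works but is a heavier tool than needed.
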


\begin{proof}
For $x,y\in X$, we have
\begin{equation}\label{20}
\langle x\rangle\vee\langle y\rangle=\langle x\wedge y\rangle.  
\end{equation}
Indeed, if $x,y\in I$ for some ideal $I$ of $X$, then Eq.~(\ref{19}) and (\ref{7}) give
\[
x\cdot(x\wedge y)=x\cdot y\in I.
\]
Thus (\ref{5}) yields $x\wedge y\in I$. Conversely,
$x\wedge y\in I$ implies that $x,y\in I$ since $I$ is an upper set. By induction, every
finitely generated ideal of $X$ is principal. Whence (\ref{17}) is a surjective map. 
\end{proof}

\begin{prop}
For a $\wedge$-closed $L$-algebra $X$, the map $(\ref{17})$ is bijective if and only if $X$
is a Brouwerian semilattice.
\end{prop}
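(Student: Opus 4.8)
The plan is to reduce the assertion to a concrete description of the principal ideals $\langle x\rangle$. Since \eqref{17} is surjective by Proposition~\ref{p8}, it is bijective if and only if it is injective; and as $I\mapsto U_I$ is a bijection onto the open sets, this is the same as injectivity of the map $x\mapsto\langle x\rangle$. I would begin by recording that in any $L$-algebra the ideals are upper sets: if $x\in I$ and $x\le y$, then $x\cdot y=1$, so \eqref{6} gives $(x\cdot y)\cdot y=1\cdot y=y\in I$. Consequently the up-set $\ua x:=\{z\in X\:|\:x\le z\}$ is always contained in $\langle x\rangle$, and the whole statement will turn on deciding when the reverse inclusion holds.

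The pivotal step is the equivalence: the map $x\mapsto\langle x\rangle$ is injective if and only if $\langle x\rangle=\ua x$ for every $x\in X$. The implication $\Leftarrow$ is trivial, since distinct elements determine distinct up-sets. For $\Ra$ I would exploit \eqref{20}: if $w\in\langle x\rangle$ then $\langle w\rangle\subs\langle x\rangle$, whence $\langle x\wedge w\rangle=\langle x\rangle\vee\langle w\rangle=\langle x\rangle$; injectivity then forces $x\wedge w=x$, i.e. $x\le w$, so $\langle x\rangle\subs\ua x$. This is precisely the place where $\wedge$-closedness is indispensable, because \eqref{20} fails without it.

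It then remains to identify the condition ``$\langle x\rangle=\ua x$ for all $x$'' with the Brouwerian axiom \eqref{14}; note that a $\wedge$-closed $L$-algebra is already a $\wedge$-semilattice with largest element $1$, so only \eqref{14} is at issue. If $X$ is Brouwerian I would check directly, using \eqref{14}, that each $\ua x$ satisfies \eqref{5}--\eqref{7}, so that $\ua x$ is an ideal and hence equals $\langle x\rangle$. For the converse, assume every $\ua x$ is an ideal. One implication of \eqref{14} is clean: given $a\le b\cdot c$, the ideal $\ua(a\wedge b)$ contains $b$ and also $b\cdot c$ (since $a\wedge b\le a\le b\cdot c$), so \eqref{5} yields $c\in\ua(a\wedge b)$, that is $a\wedge b\le c$. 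The opposite implication is the subtle one: from $a\wedge b\le c$, Eq.~\eqref{18} gives $(a\cdot b)\cdot(a\cdot c)=1$, and then \eqref{2} gives $(b\cdot a)\cdot(b\cdot c)=1$, i.e. $b\cdot a\le b\cdot c$; combining this with $a\le b\cdot a$ yields $a\le b\cdot c$.

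The main obstacle is the very last inequality $a\le b\cdot a$, which is \emph{not} a law of general $L$-algebras (the axiom \eqref{7} would otherwise be redundant). The point of the argument is that it becomes available exactly under our hypothesis: applying \eqref{7} to the element $a$ of the ideal $\ua a$ gives $b\cdot a\in\ua a$, i.e. $a\le b\cdot a$. Thus the same hypothesis that trivializes injectivity also supplies the missing half of \eqref{14}. Apart from this, the computations reduce to routine uses of the ideal axioms and of \eqref{2}, \eqref{18}, \eqref{20}, and I expect no further difficulty. A small point to mention in passing is that a Brouwerian semilattice is automatically $\wedge$-closed --- its semilattice meet coincides with the meet \eqref{13} of $S(X)$ --- so both conditions in the statement concern the same class of $L$-algebras.
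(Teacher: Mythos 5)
Your proof is correct, and its skeleton matches the paper's: both directions ultimately rest on Eq.~\eqref{20} and on the description of principal ideals as up-sets, and your backward direction (Brouwerian $\Ra$ injective, via checking that $\ua x$ satisfies \eqref{5}--\eqref{7}, hence $\langle x\rangle=\ua x$) is the paper's argument essentially verbatim --- the ``routine'' verification you defer is exactly the computation the paper writes out, so nothing is missing there. The forward direction, however, is organized differently. The paper extracts two semilattice identities directly from injectivity: from $\langle x\rangle\vee\langle y\rangle=\langle x\rangle\vee\langle x\cdot y\rangle$ and \eqref{20} it gets $x\wedge y=x\wedge(x\cdot y)$ and $y\le x\cdot y$, and then verifies \eqref{14} by computing with \eqref{13} and \eqref{12}. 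You instead interpose the pivot ``\eqref{17} injective $\Equ$ $\langle x\rangle=\ua x$ for all $x$'', so that both implications pass through one condition, and you recover \eqref{14} from the ideal axioms applied to up-sets: \eqref{5} on $\ua(a\wedge b)$ gives $a\le b\cdot c\Ra a\wedge b\le c$, while \eqref{18} and \eqref{2} together with $a\le b\cdot a$ give the converse --- and you correctly obtain $a\le b\cdot a$ from \eqref{7} applied to $\ua a$, which is the same mechanism hidden in the paper's step $\langle y\rangle\vee\langle x\cdot y\rangle=\langle y\rangle$. What your route buys is symmetry between the two directions and an explicit explanation of where the non-law $a\le b\cdot a$ comes from; the paper's route is slightly shorter because it never isolates the intermediate characterization. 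Your closing remark that a Brouwerian semilattice is automatically $\wedge$-closed is also correct: since $x\cdot(x\wedge y)=x\cdot y$ and $x\wedge y\le x$, the definition of the partial product gives $(x\cdot y)x=x\wedge y$ in $S(X)$, so the semilattice meet coincides with the meet \eqref{13}.
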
 

\begin{proof}
Assume that the map (\ref{17}) is injective. For $x,y\in X$, (\ref{5}) and (\ref{7}) give
$$\langle x\rangle\vee\langle y\rangle=\langle x\rangle\vee\langle x\cdot y\rangle.$$
Hence Eq.~(\ref{20}) yields $\langle x\wedge y\rangle=\langle x\wedge(x\cdot y)\rangle$.
Thus $x\wedge y=x\wedge(x\cdot y)$. Similarly,
$\langle y\rangle\vee\langle x\cdot y\rangle=\langle y\rangle$ yields $y\wedge
(x\cdot y)=y$, that is, $y\le x\cdot y$. To verify (\ref{14}), assume that $x\wedge y\le z$.
By Eqs.~(\ref{13}), this implies that $(y\cdot x)y\le z$. So Eq.~(\ref{10}) gives
$x\le y\cdot x\le y\cdot z$. Conversely, $x\le y\cdot z$ implies that $x\wedge y\le y
\wedge(y\cdot z)=y\wedge z\le z$. Thus, $X$ is a Brouwerian semilattice. 

Conversely, let $X$ be a Brouwerian semilattice. We show that the ideal $\langle x\rangle$
generated by $x\in X$ is the upper set $\ua x:=\{y\in X\:|\:y\ge x\}$. To verify (\ref{5}),
assume that $y\in\ua x$ and $y\cdot z\in\ua x$. Then $x=x\wedge y\le z$, which yields $z\in
\ua x$. For $x,y\in X$, the implications $x\cdot y\le x\cdot y\Ra (x\cdot y)\wedge x\le y\Ra
x\le(x\cdot y)\cdot y$ show that $\ua x$ satisfies (\ref{6}). Furthermore, $x\wedge y\le x$
yields $x\le y\cdot x$, and $x\wedge y\wedge x\le y$ implies that $x\le y\cdot(x\cdot y)$.
Hence $\ua x$ is an ideal, and thus $\ua x=\langle x\rangle$. So the map (\ref{17}) is
injective. By Proposition~\ref{p8}, it is surjective. 
\end{proof}






\section*{Acknowledgments}

Leandro Vendramin was supported in part by OZR3762 
of Vrije Universiteit Brussel
and FWO Senior Research Project G004124N.
We express our gratitude to Carsten Dietzel, Silvia Properzi and
Yufei Qin for comments and corrections, and  
to the referee for their thorough reading and valuable suggestions.

\vspace{-2mm}

\end{document}